\journal{European Journal of Combinatorics}
\newcommand{\Fraisse}{Fra\"{i}ss\'{e}\xspace} 
\newcommand{\Jarik}{Ne\v set\v ril\xspace}
\newcommand{\N}{\mathbb{N}} % natural numbers
\newcommand{\age}[1]{\mathrm{Age}(#1)}% Age
\newcommand{\ndo}[1]{\mathrm{End}(#1)}% Endomorphisms
\newcommand{\trg}{{\normalfont(}$\bigtriangleup${\normalfont )}\xspace}
\newcommand{\ntrg}{{$\neg$\normalfont(}$\bigtriangleup${\normalfont )}\xspace}
\newcommand{\ctrg}{{\normalfont(}$\therefore${\normalfont)}\xspace}
\newcommand{\kk}[1]{\mathcal K(#1)}
\newcommand{\okk}[1]{{\overline{\mathcal K}}(#1)}
\newcommand{\oo}[1]{\sigma(#1)}
\newcommand{\coord}[1]{\mathrm{address}(#1)}
\newcommand{\fin}[1]{[{#1}]^{<\omega}}
\newtheorem{theorem}{Theorem}
\newtheorem{lemma}[theorem]{Lemma}
\newtheorem{corollary}[theorem]{Corollary}
\newtheorem{proposition}[theorem]{Proposition}
\newtheorem{definition}[theorem]{Definition}
\newdefinition{construction}{Construction}
\newdefinition{remark}[theorem]{Remark}
\newdefinition{example}{Example}
\newdefinition{notation}{Notation}
\newtheorem{claim}[theorem]{Claim}
\begin{document}
\begin{frontmatter}

\title{The independence number of HH-homogeneous graphs and a classification of MB-homogeneous graphs}
%\tnotetext[mytitlenote]{Fully documented templates are available in the elsarticle package on \href{http://www.ctan.org/tex-archive/macros/latex/contrib/elsarticle}{CTAN}.}

%% Group authors per affiliation:
\author[ia]{Andr\'{e}s Aranda}
\ead{andres.aranda@gmail.com}
\author[iuuk,cas]{David Hartman}
\address[ia]{Institut f\"ur Algebra, Technische Universit\"{a}t Dresden, Zellescher Weg 12-14, Dresden.}
\address[iuuk]{Computer Science Institute of Charles University, Charles University, Malostransk\'e n\'{a}m. 25, Prague 1}
\address[cas]{Institute of Computer Science of the Czech Academy of Sciences, Pod Vod\'{a}renskou v\v{e}\v{z}\'{i} 271/2, Prague 8}
\ead{hartman@iuuk.mff.cuni.cz}

%\address[sta]{School of Mathematics and Statistics, University of St Andrews, St Andrews, KY16 9SS, United Kingdom.}
%\ead{tdhc@st-andrews.ac.uk.}
\fntext[myfootnote]{© 2019. This manuscript version is made available under the CC-BY-NC-ND 4.0 license http://creativecommons.org/licenses/by-nc-nd/4.0/}
\begin{abstract}
We show that the independence number of a countably infinite connected HH-homogeneous graph that does not contain the Rado graph as a spanning subgraph is finite and present a classification of MB-homogeneous graphs up to bimorphism-equivalence as a consequence.
\end{abstract}

\begin{keyword}
homomorphism-homogeneity \sep morphism-extension classes \sep HH-homogeneity \sep MB-homogeneity
\MSC[2010] 03C15\sep 05C60\sep 05C63\sep 05C69\sep 05C75
\end{keyword}

\end{frontmatter}

\iffalse
\begin{proposition}\label{prop:diam2}
If $G$ is a connected HH-homogeneous graph, then its diameter is at most 2.
\end{proposition}
\begin{proof}
If $a$ and $b$ are at distance 3, say connected by a path $a\sim x\sim y\sim b$, then the map $a\mapsto a, y\mapsto b$ is a homomorphism that cannot be extended to include $x$ in its domain.
\end{proof}
\fi

\section{Introduction}

The symmetry of graphs, or more generally relational structures, is usually measured by such numbers as the degree of transitivity or homogeneity of the natural action of their automorphism group. One of the strongest notions of symmetry is \emph{ultrahomogeneity}, defined as the property that any isomorphism between two finite induced subgraphs can be extended to an automorphism. This notion was generalized by Cameron and \Jarik, in \cite{CameronNesetril:2006}, to {\em homomorphism-homogeneity}, requiring that any local homomorphism (that is, a homomorphism between finite induced substructures) extends to an endomorphism of the ambient structure. By specifying the type of local homomorphism and endomorphism, several new \emph{morphism-extension classes} were introduced by Lockett and Truss (see \cite{LockettTruss:2014}), each denoted by a pair of characters as XY and defined by the condition that any local $X$-morphism extends to a global $Y$-morphism. Here ${\rm X\in\{H,M,I\}}$ stands for \emph{homo, mono} or \emph{iso} and ${\rm Y\in\{H,A,I,B,E,M\}}$ stands for \emph{homo, auto, iso, bi, epi} or \emph{mono}. Thus, for example, the notion of homomorphism-homogeneity above is what we will call HH-homogeneity, and ultrahomogeneity is IA-homogeneity. In this paper, we will focus on the class of MB-homogeneous graphs, where any local monomorphism is a restriction of a bijective endomorphism (\emph{bimorphism}) of the ambient graph.

One of the main tasks in this area is classification, i.e., determining, given a language $L$ and a set of axioms $T$, all countable $L$-structures satisfying $T$ that fall into individual morphism-extension classes. A classic example of successful classification is the Lachlan-Woodrow theorem~\cite{LachlanWoodrow:1980}, which in our notation is a classification of IA-homogeneous graphs. Classification theorems can have broad implications because some of the classes appear in other areas of mathematics. For example, IH-homogeneous graphs appear in the area of graph limits~\cite{NesetrilPOM:2013} and HH-homogeneous structures appear as weakly oligomorphic structures that have found application in the research of infinite-domain CSPs~\cite{CMPech:2011}. We consider the extended family of classes as defined by Lockett and Truss consisting of 18 morphism-extension classes for general structures, which in the case of countable structures collapse to the fifteen presented in Figure~\ref{fig:lockettclasses}. 

\begin{figure}[h!]
\centering
\includegraphics[scale=0.8]{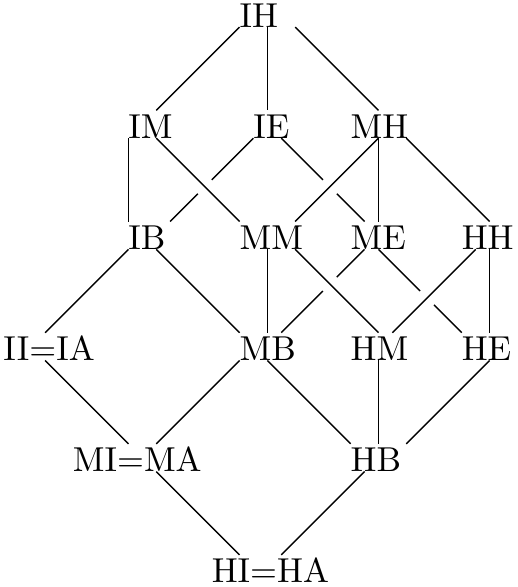}
\caption{Morphism-extension classes of countable structures, partially ordered by $\subseteq$. See~\cite{LockettTruss:2014} for more details.}
\label{fig:lockettclasses}
\end{figure}

A first approach to the problem of classifying countable homomorphism-homogeneous $L$-structures satisfying the axioms in $T$ is to determine the partial order of morphism-extension classes of such $L$-structures, because some of the classes in Figure \ref{fig:lockettclasses} may turn out to be equal for the $L$-structures in question. In the case of graphs, progress even in this simpler question has been slow: in the original 2006 paper, Cameron and \Jarik asked whether the classes HH and MH are equal for countable graphs, and the equality of these classes was established in 2010 for countable graphs by Rusinov and Schweitzer~\cite{RusinovSchweitzer:2010}. It was only last year that all the equalities and inequalities between morphism-extension classes of countable graphs were settled. We now know the partial order of morphism-extension classes of graphs. For related results regarding the equality of MH and HH in binary structures, we refer the reader to~\cite{Hartman:2014} and~\cite{ArandaHartman:2018}. 

Among the first results in the area is the fact that any countable graph that contains the Rado graph $\mathcal R$ as a spanning subgraph is HH-homogeneous. Apart from these and some trivial cases like disjoint unions of complete graphs, no countable graphs were known to exist in the class HH at the end of \cite{CameronNesetril:2006}, prompting the authors of the original paper to ask for examples of countably infinite, connected, HH-homogeneous graphs that do not contain $\mathcal R$ as a spanning subgraph.

The first examples of such graphs were presented by Rusinov and Schweitzer in \cite{RusinovSchweitzer:2010}, but up to the date of this writing, a full classification of countable HH-homogeneous graphs, or, more ambitiously, IH-homogeneous graphs (by which we mean a ``reasonable,'' list of the contents of each morphism class) does not exist. We hasten to mention here that the qualifier ``reasonable'' is important and in general means ``up to the appropriate equivalence relation, depending on the class.'' 

Most of the morphism-extension classes of countable graphs are uncountable (the exceptions are the countably infinite IA and the finite MI=MA and HM=HI=HB=HA), but uncountability by itself does not preclude a class from being classifiable. Cherlin successfully classified the uncountably many ultrahomogeneous directed graphs in \cite{cherlin1998classification} into countably many classes, one of which is uncountable. This is an example of the type of classification to which we aspire: only countably many classes, and few of them, preferably only one, uncountable.

In classes where a \Fraisse theorem is known, the uniqueness conditions for the limit are the gold standard for classification. This does not mean, however, that a given class is actually classifiable up to that equivalence relation. For ultrahomogeneous structures, limits are unique up to isomorphism, but, as we have seen, the classification of ultrahomogeneous directed graphs includes a class with uncountably many pairwise non-isomorphic structures. 

Most of the \Fraisse-type theorems for morphism-extension classes XY were found by Coleman \cite{Coleman:2018}, improving previous results of C. Pech and M. Pech \cite{Pech2016towards}, though no \Fraisse-type theorem is known for some morphism-extension classes.

For MB-homogeneous structures, the equivalence relation coming from the \Fraisse-type theorem is B-equivalence, which holds when every partial isomorphism with finite domain between two MB-homogeneous $L$-structures of the same age can be extended to a bimorphism. In the case of MB-homogeneous graphs, isomorphism or even the weaker notion of B-equivalence have been shown to produce uncountably many equivalence classes. By Theorem 3.20 from \cite{ColemanEvansGray:2019}, there exist $2^{\aleph_0}$ countably infinite, pairwise non-B-equivalent graphs in the bimorphism-equivalence class of the Rado graph. This rules out a classification up to B-equivalence. The next best candidate for equivalence relation in a classification of MB-homogeneous graphs is bimorphism-equivalence. 

The main contribution of this paper is Theorem \ref{thm:mainthm}, an analogue of the Lachlan-Woodrow theorem for MB-homogeneous graphs, providing a positive answer to Problem 4.12 from \cite{ColemanEvansGray:2019} (i.e., a classification of MB-homogeneous graphs up to bimorphism-equivalence). Theorem \ref{thm:mainthm} arises as a consequence of a preliminary result in our efforts towards a classification of HH-homogeneous graphs. Along with this result, we give a bound on the independence number of a countable connected HH-homogeneous graph that does not contain $\mathcal R$ as a spanning subgraph, in terms of the highest independence number of the neighbourhood of a vertex. In short, HH-homogeneity implies that in such a graph $G$ there is a finite upper bound $\oo{G}$ on the independence number of induced subgraphs defined as $N(v)$, and the independence number of $G$ cannot be ``too large'' compared to $\oo{G}$. 

The paper is organized as follows: first we prove in Section \ref{sec:alphainf} that HH-homogeneous graphs that do not contain the Rado graph as a spanning subgraph have finite independence number, and then we use that result in Section \ref{sec:MBclass} to classify MB-homogeneous graphs.

\section{The independence number of countable HH-homogeneous graphs}\label{sec:alphainf}
We start by fixing notation and recalling or introducing a few definitions. 
\begin{definition}~
\begin{enumerate}
\item{A \emph{graph} is a set equipped with a binary irreflexive symmetric relation. As such, a graph is a pair $G=(V,E)$, where $V$ is a set and $E\subset V^2$ satisfies $(x,y)\in E\rightarrow (y,x)\in E$ and for all $x\in V$ the pair $(x,x)$ is not in $E$. The elements of $V$ are called \emph{vertices}, the elements of $E$ are \emph{edges}, and the pairs of distinct vertices that do not satisfy $E$ are called \emph{nonedges}. Most of the time we will not distinguish between a graph and its vertex set. We will use the symbol $\sim$ to denote the edge relation.}
\item{For any set $X$, $\fin{X}$ denotes the set of finite subsets of $X$, and ${X}\choose{k}$ denotes the set of subsets of $X$ of size $k$. Abusing notation, when $G$ is a graph we always think of $A\in\fin{G}$ as the finite subgraph induced by $A$ in $G$ (definition below, item \ref{induced}).}
\item{In a graph $G$, a vertex $u$ is a \emph{neighbour} of $v$ if $u\sim v$ holds. The \emph{neighbourhood} of $v\in G$ is the set $\{w\in G:w\sim v\}$. For $S\in\fin{G}$, $N(S)$ is the set $\{v\in G:\forall s\in S(v\sim s)\}$. We call $N(S)$ the \emph{common neighbourhood} of $S$. When $S$ is a singleton, we write $N(v)$ for $N(\{v\})$.}
\item{The \emph{degree} of a vertex $v$ in $G$ is the cardinality of $N(v)$; its \emph{codegree} is the cardinality of $G\setminus(N(v)\cup\{v\})$. }
\item{A set of vertices $I\subset G$ is \emph{independent} if for all $u,v\in I$ we have $u\not\sim v$. The \emph{independence number} of a graph $G$, denoted by $\alpha(G)$, is defined as $\sup\{|X|:X\text{ is an independent subset of }G\}$.}
\end{enumerate}
Let $G=(V,E)$ and $H=(V',E')$ be graphs.
\begin{enumerate}
\setcounter{enumi}{5}
\item{The \emph{complement} of $G$ is the graph $\overline{G}=(V,V^2\setminus(E\cup\{(v,v):v\in V\}))$.}
\item{A \emph{homomorphism} from $G$ to $H$ is a function $h:V\to V'$ that preserves edges: if $x\sim y$ in $G$, then $h(x)\sim h(y)$ in $H$. A \emph{monomorphism} is an injective homomorphism and a \emph{bimorphism} is a bijective homomorphism.}
\item{$H$ is a \emph{subgraph} of $G$ if $V'\subseteq V$ and $E'\subseteq E$. If $V'=V$, then $H$ is a \emph{spanning} subgraph of $G$. If $E'=E\cap(V')^2$, then $H$ is an \emph{induced} subgraph of $G$. }\label{induced}
\item{$\age{G}$ is the set of isomorphism types of finite induced subgraphs of $G$. We think of the elements of $\age{G}$ as graphs on disjoint vertex sets that do not contain vertices from $G$, and we view the induced subgraphs of $G$ as images of elements of $\age{G}$ under embeddings.}
\item{Let $X\in\fin{G}$. If $c\in N(X)$, we call $c$ a \emph{cone} over $X$. If $d\not\sim x$ for all $x\in X$ and $d\notin X$, then we refer to $d$ as a \emph{co-cone} over $X$. Note that $d$ is a co-cone over $X$ if and only if $d$ is a cone over $X$ in $\overline{G}$.}
\item{A graph $G$ has property \trg if every finite induced subgraph of $G$ has a cone in $G$.}
\item{A graph $G$ has property \ctrg if every finite induced subgraph of $G$ has a co-cone in $G$.}
\end{enumerate}
\end{definition}

The Rado graph $\mathcal R$ is the unique countable graph with the following property: for all disjoint $A,B\in\fin{\mathcal R}$, there exists a vertex $x\in\mathcal R$ such that $x$ is a cone over $A$ and a co-cone over $B$. $\mathcal R$ is ultrahomogeneous and its age is the set of isomorphism types of finite graphs.

It is easy to prove that a countably infinite graph satisfies \trg iff it contains the Rado graph as a spanning subgraph. Property \ctrg holds for $G$ exactly when $\overline G$ has \trg.

\begin{definition}
Let $G$ be an infinite graph.
\begin{enumerate}
\item{Define $\kk{G}$ as the subset of $\age{G}$ consisting of all $A$ for which there exists an embedding $e:A\to G$ such that $G$ contains a cone over $e[A]$}
\item{Define $\okk G$ as the subset of $\age{G}$ consisting of all $A\in\age{G}$ for which there exists an embedding $e:A\to G$ such that no vertex in $G\setminus e[A]$ is a cone in $G$ over $e[A]$}
\end{enumerate}
\end{definition}

In any graph $G$ it is true that $\age{G}=\kk{G}\cup\okk{G}$ because any $X\in\fin{G}$ either has a cone in $G$ or doesn't, but $\kk{G}$ and $\okk{G}$ are seldom disjoint. For example, in the two-way infinite path $P$, the element of the age isomorphic to a nonedge is in $\kk{P}\cap\okk{P}$ because it can be embedded in $P$ as a pair at distance 2 or a pair at a larger distance. The reader should not make the mistake of confusing $\okk{G}$ with the set of structures in $\age{G}$ with copies in $G$ that have a co-cone in $G$. 

\begin{proposition}\label{prop:nbhdhh}\label{prop:homneighs}
If $G$ is an HH-homogeneous graph and $S\in\fin{G}$, then the subgraph of $G$ induced by $N(S)$ is an HH-homogeneous graph.
\end{proposition}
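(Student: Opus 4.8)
The plan is to reduce HH-homogeneity of the induced subgraph on $N(S)$ to HH-homogeneity of the ambient graph $G$. The guiding idea is that a self-homomorphism of $G$ which fixes $S$ pointwise is automatically forced to preserve the common neighbourhood $N(S)$; so I would take an arbitrary local homomorphism of the neighbourhood, enlarge it by the identity on $S$, extend the result to a global endomorphism using the hypothesis on $G$, and then restrict back.

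In detail, I would let $H$ denote the subgraph of $G$ induced by $N(S)$ and fix an arbitrary homomorphism $f\colon A\to B$ between finite induced subgraphs $A,B$ of $H$, so that $A,B\in\fin{N(S)}$. Since the edge relation is irreflexive, no vertex of $N(S)$ lies in $S$, hence $A$ and $S$ are disjoint, and I can define a map $g$ on the finite set $A\cup S$ by $g|_A=f$ and $g|_S=\mathrm{id}_S$. The first thing to check is that $g$ is a homomorphism from the induced subgraph on $A\cup S$ into $G$: edges inside $A$ are preserved because $f$ is, edges inside $S$ are preserved because $g$ is the identity there, and for an edge between $a\in A$ and $s\in S$ one needs $f(a)\sim s$, which holds because $f(a)\in B\subseteq N(S)$ is adjacent to every vertex of $S$ by the definition of $N(S)$.

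Once $g$ is seen to be a local homomorphism of $G$, HH-homogeneity supplies an endomorphism $\widehat g$ of $G$ extending $g$. The concluding step is to observe that $\widehat g$ restricts to an endomorphism of $H$: for any $v\in N(S)$ and any $s\in S$ we have $\widehat g(v)\sim\widehat g(s)=s$ because $\widehat g$ fixes $S$ pointwise, so $\widehat g(v)\in N(S)$, i.e. $\widehat g$ maps $N(S)$ into itself. As $H$ is an induced subgraph, the edge-preservation of $\widehat g$ restricts to $N(S)$, so $\widehat g|_{N(S)}$ is an endomorphism of $H$, and it extends $f$ since $\widehat g|_A=f$. Because $f$ was arbitrary, $H$ is HH-homogeneous.

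I expect the only genuinely substantive point to be the verification in the second paragraph that adjoining $\mathrm{id}_S$ still yields a homomorphism, and specifically the $A$-to-$S$ edges, where one must use that the image of $f$ lands inside $N(S)$. This is also exactly the place where the construction is forced: extending $f$ alone would not control the behaviour of the resulting endomorphism on $N(S)$, whereas pinning $S$ down by the identity is precisely what guarantees that the global extension keeps $N(S)$ invariant. The remaining verifications are routine bookkeeping, and the degenerate case $S=\emptyset$ (where $N(S)=G$ and the statement reduces to the hypothesis) is absorbed without change.
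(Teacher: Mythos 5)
Your proof is correct and follows exactly the paper's argument: extend the local homomorphism by the identity on $S$, invoke HH-homogeneity of $G$, and observe that the resulting endomorphism maps $N(S)$ into itself because it fixes $S$ pointwise. The only difference is that you spell out the routine verifications (disjointness of $A$ and $S$, preservation of $A$-to-$S$ edges) that the paper leaves implicit.
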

\begin{proof}
Consider $A,B\in\fin{N(S)}$ and a homomorphism $f\colon A\to B$. Then we can extend $f$ to $f'\colon A\cup S\to B\cup S$ by fixing each $v\in S$, and the result is still a homomorphism. By HH-homogeneity, there exists an endomorphism $F\colon G\to G$ that extends $f'$. This $F$ maps $N(S)$ into $N(S)$, so $F|_{N(S)}$ is an endomorphism of $N(S)$.
\end{proof}

In particular, the neighbourhood set of any vertex is an HH-homogeneous graph. Homomorphism-homogeneity also implies some uniformity of degrees:
\begin{proposition}\label{prop:infdegree}
Let $G$ be a countably infinite HH-homogeneous graph. If $G$ contains a vertex of infinite degree, then all vertices have infinite degree.
\end{proposition}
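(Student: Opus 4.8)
The plan is to argue by contradiction: suppose $G$ contains a vertex $u$ of infinite degree and a vertex $v$ with $|N(v)|=d<\infty$. The naive idea is to extend the one-point homomorphism $u\mapsto v$ to an endomorphism $F$ and then read off infinitely many neighbours of $v$ from the images of the (infinitely many) neighbours of $u$. This fails at once, and the failure is the crux of the whole problem: the endomorphisms produced by HH-homogeneity need not be injective, so $F$ may collapse all of $N(u)$ onto the finite set $N(v)$. The key idea is to turn this collapsing \emph{in our favour}, using it to manufacture structure we can later exploit.

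First I would extract an infinite independent set inside $N(u)$. Extend $u\mapsto v$ (a homomorphism between one-point induced subgraphs) to an endomorphism $F$ with $F(u)=v$; if $d=0$ this is already absurd, since any neighbour of $u$ would have to map to a neighbour of $v$. For $x\sim u$ we have $F(x)\sim v$, so $F$ maps the infinite set $N(u)$ into the finite set $N(v)$, and by pigeonhole some fibre $J=\{x\in N(u):F(x)=c\}$, for a suitable $c\in N(v)$, is infinite. Since $G$ has no loops, any two adjacent vertices of $J$ would force $c\sim c$; hence $J$ is an infinite \emph{independent} set all of whose elements are neighbours of $u$. (One could instead invoke the infinite Ramsey theorem on $N(u)$, but then the infinite-clique alternative needs separate handling, whereas the collapsing argument delivers the independent set directly.)

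The second step builds a local homomorphism that cannot extend, contradicting HH-homogeneity. Write $N(v)=\{v_1,\dots,v_d\}$ and set $S=\{v\}\cup N(v)$, a set of $d+1$ vertices. I claim $S$ has no common neighbour: a vertex adjacent to $v$ must lie in $N(v)$, and no element of $N(v)$ is adjacent to itself, so $N(S)=\emptyset$. Now pick distinct $a_0,\dots,a_d\in J$ and define $g$ on the independent set $\{a_0,\dots,a_d\}$ by $g(a_0)=v$ and $g(a_i)=v_i$ for $1\le i\le d$. Because the domain induces no edges, $g$ is (vacuously) a homomorphism, so by HH-homogeneity it extends to an endomorphism $H$. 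But $u$ is adjacent to every $a_i$ (they lie in $J\subseteq N(u)$, and $u\notin N(u)$, so $u$ is genuinely outside the domain), whence $H(u)$ must be adjacent to every element of $g(\{a_0,\dots,a_d\})=S$; that is, $H(u)\in N(S)=\emptyset$, which is impossible.

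The step I expect to be the main obstacle is the first: the entire argument hinges on recognising that the collapsing phenomenon, which defeats the naive approach, is exactly what produces the infinite independent set of common neighbours of $u$ that we need, and on arranging $d+1$ of these independent neighbours so that their image $S$ can be forced to have empty common neighbourhood. Once $J$ is in hand, the non-extendability in the second step is routine, and the contradiction shows that no finite-degree vertex can exist, so every vertex of $G$ has infinite degree.
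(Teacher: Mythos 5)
Your proof is correct, but it takes a genuinely different route from the paper's. The paper applies Ramsey's theorem to $N(v_0)$ and splits into two cases: if $N(v_0)$ contains an infinite independent set, it shows the whole graph has property \trg (every finite subgraph has a cone), which forces all degrees to be infinite; if $N(v_0)$ contains an infinite clique, it maps $v_0$ onto an arbitrary vertex $w$ and uses the fact that an endomorphism restricted to a clique is injective, so $N(w)$ inherits an infinite clique. You avoid Ramsey altogether: assuming a finite-degree vertex $v$ exists, you collapse the infinite set $N(u)$ onto the finite set $N(v)$ via an extension of $u\mapsto v$, and the pigeonhole principle hands you an infinite independent fibre $J\subseteq N(u)$ --- the hypothetical finite-degree vertex itself manufactures the independent set, and the clique alternative never arises. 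Your second step is then a pointed instance of the paper's Case 1: instead of deriving \trg for all finite sets, you aim the independent set $J$ at the one finite set that provably has no cone, namely $\{v\}\cup N(v)$, and the image of $u$ under the extension yields the contradiction. What the paper's route buys is reusability: its Case 1 sub-argument (an infinite independent set in a neighbourhood implies \trg) is invoked again in the proof of Proposition \ref{prop:alphabound}, and the statement is proved directly rather than by contradiction. What your route buys is economy: no Ramsey's theorem, no case split, and a single self-contained contradiction.
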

\begin{proof}
Suppose that $v_0\in G$ has infinite degree. By Ramsey's theorem, one of the following two cases holds:
\begin{enumerate}
\item{$N(v_0)$ \emph{contains an infinite independent set $I$:} In this case, it follows that $G$ has property \trg. Indeed, consider any $A\in\fin{G}$ and $B\subset I$ with $|B|=|A|$. Any bijection $h:B\to A$ is a homomorphism because $I$ is independent, and so by HH-homogeneity there exists an endomorphism $H$ with $H|_B=h$. The image of $v_0$ under $H$ is a cone over $A$. Clearly, if $G$ satisfies \trg then every vertex has infinite degree.}
\item{$N(v_0)$ \emph{contains an infinite clique } $K$: Let $w$ be any vertex of $G$. The mapping $v_0\mapsto w$ is a homomorphism, so by HH-homogeneity it extends to an endomorphism $H$ of $G$. By definition, the restriction of an endomorphism to a clique is injective, and therefore $N(w)$ contains an infinite clique and in particular $w$ has infinite degree in $G$.}
\end{enumerate}
\end{proof}

In a partial order $(P,\leq)$, we call $X\subset P$ \emph{downward closed} if given any $x\in X$ and $y\in P$, $y\leq x$ implies $y\in X$. Similarly, $Y\subset P$ is \emph{upward closed} if for all $y\in Y$ and all $x\in P$, $x\geq y$ implies $x\in Y$. Naturally, we consider the empty set to be upward and downward closed.

Throughout the paper, we will prove the possibility of extending a homomorphism with finite domain $f$ to an endomorphism by proving that $f$ can be extended to a homomorphism $f'$ whose domain is the domain of $f$ plus any new vertex; in fact we may assume that the new vertex is a cone over the domain of $f$, as finding an image for a cone implies finding an image for any vertex. This is known as the \emph{one-point extension property} (or \emph{weak one-point extension property} if we assume the new vertex is a cone), and it is known that homomorphism-homogeneity is equivalent to either of these properties for finite and countably infinite structures, see \cite{pech2009local}, particularly section 2.3.

Write $A\preceq B$ if there exists a surjective homomorphism $A\to B$. This relation is a partial order on $\age{G}$. Observe that in this order $A\preceq B$ implies $|A|\geq|B|$. The next two propositions characterise countable HH-homogeneous graphs with vertices of infinite degree. The first one is the one-point extension property adjusted to our language; the second statement is almost-known in the sense that the proof appeared in \cite{RusinovSchweitzer:2010} for connected HH-homogeneous graphs but in fact only requires the weaker hypothesis of infinite degree for all vertices. We include the proofs because the results are crucial for subsequent arguments.

\begin{proposition}\label{prop:3Conditions}
Let $G$ be a countable graph. Then $G$ is HH-homogeneous iff the following two conditions hold.
\begin{enumerate}
\item{$\kk{G}\cap\okk{G}=\varnothing$ and}\label{DisjCond}
\item{$\kk{G}$ upward-closed in $(\age{G},\preceq)$ (equivalently, $\okk{G}$ downward-closed in $(\age{G},\preceq)$
)}\label{kkCond}\label{okkCond}
\end{enumerate}
\end{proposition}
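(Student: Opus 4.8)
The plan is to prove both implications through the \emph{weak one-point extension property} recalled just before the statement, which reduces HH-homogeneity to the following task: given a homomorphism $f$ with finite domain $X$ and a vertex $c$ that is a cone over $X$, produce an image for $c$. Throughout I will freely identify a finite induced subgraph of $G$ with its isomorphism type in $\age{G}$, as the paper does.

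For the forward direction I assume $G$ is HH-homogeneous. To get Condition \ref{DisjCond}, suppose for contradiction that some $A\in\kk{G}\cap\okk{G}$. Then there are embeddings $e_1,e_2\colon A\to G$ with $e_1[A]$ having a cone $c$ and $e_2[A]$ having none. The map $e_2\circ e_1^{-1}$ is an isomorphism between the induced subgraphs $e_1[A]$ and $e_2[A]$, hence a local homomorphism, so it extends to an endomorphism $F$. Since $c$ is adjacent to every vertex of $e_1[A]$ and $F$ preserves edges, $F(c)$ is adjacent to every vertex of $e_2[A]$, and $F(c)\notin e_2[A]$ because no vertex is its own neighbour; thus $F(c)$ is a cone over $e_2[A]$, a contradiction. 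For Condition \ref{kkCond}, take $A\in\kk{G}$ with an embedding $e$ and a cone $c$ over $e[A]$, a surjective homomorphism $g\colon A\to B$ witnessing $A\preceq B$, and any embedding $e'\colon B\to G$. Then $e'\circ g\circ e^{-1}$ is a local homomorphism from $e[A]$ onto $e'[B]$; extending it to an endomorphism $F$, the surjectivity of $g$ forces $\{F(v):v\in e[A]\}=e'[B]$, so $F(c)$ is adjacent to all of $e'[B]$ and lies outside it. Hence $F(c)$ is a cone over $e'[B]$ and $B\in\kk{G}$.

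For the converse I assume both conditions and verify the weak one-point extension property. Let $f\colon X\to G$ be a homomorphism with finite domain and let $c$ be a cone over $X$. Write $Y=f[X]$ for the image, viewed as the induced subgraph $G[Y]$. The inclusion of $X$ with its cone $c$ shows $X\in\kk{G}$, while $f$ is a surjective homomorphism from $X$ onto $Y$, so $X\preceq Y$; by Condition \ref{kkCond} we obtain $Y\in\kk{G}$.

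I expect the crux to be the final inference, where Condition \ref{DisjCond} becomes indispensable: membership $Y\in\kk{G}$ only asserts that \emph{some} copy of $Y$ has a cone, whereas I need a cone over the \emph{particular} copy $f[X]$. Disjointness resolves this, for if $f[X]$ had no cone then $Y$ would also lie in $\okk{G}$, contradicting $\kk{G}\cap\okk{G}=\varnothing$. Thus $f[X]$ has a cone $c'$, and setting $f'(c)=c'$ yields a homomorphism on $X\cup\{c\}$ since $c'$ is adjacent to every vertex of $f[X]$; this establishes the weak one-point extension property and hence HH-homogeneity. Finally, the parenthetical equivalence of the two forms of Condition \ref{kkCond} follows from $\age{G}=\kk{G}\cup\okk{G}$ together with Condition \ref{DisjCond}, which makes $\kk{G}$ and $\okk{G}$ complementary, so that $\kk{G}$ is upward-closed exactly when $\okk{G}$ is downward-closed.
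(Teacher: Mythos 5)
Your proposal is correct and follows essentially the same route as the paper: the forward direction is argued identically, and your converse uses the same two ingredients in the same way --- upward-closedness of $\kk{G}$ to transfer membership from the domain to the image, and disjointness of $\kk{G}$ and $\okk{G}$ to upgrade ``some copy of the image has a cone'' to ``the particular copy $f[X]$ has a cone.'' The only difference is that you invoke the weak one-point extension property at the outset, which collapses the paper's case analysis over an arbitrary new vertex (image in $\kk{G}$ versus $\okk{G}$, new vertex with or without neighbours in the domain) into the single cone case; this is a reduction the paper itself licenses, with citation, in the paragraph immediately preceding the proposition.
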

\begin{proof}
Suppose first that $G$ is HH-homogeneous. 

The first condition is clearly necessary because an isomorphism is a homomorphism: if $C\in\kk{G}\cap\okk{G}$, then we can find $X,Y\subset G$ isomorphic to $C$ and such that there is a cone $c$ over $X$ in $G$, but no vertex of $G$ is a cone over $Y$. An isomorphism $X\to Y$ is a homomorphism between finite substructures of $G$, but it cannot be extended to an endomorphism of $G$ since there is no possible image for $c$. 

The first condition allows us to abuse notation and simply say $D\in\kk{G}$ (or $D\in\okk{G}$) for a finite $D\subset G$, without mentioning the embedding, if $G$ is HH-homogeneous. Formally, $D\in\kk{G}$ means that there is an copy of $D$ in $G$ such that $G$ contains a cone over $D$, but as we have seen, this is true of \emph{any} copy of $D$ in $G$, or, equivalently, true for every embedding of the element of the age isomorphic to $D$ into $G$.

Next, we prove the equivalence of the two conditions in item \ref{okkCond} if condition \ref{DisjCond} holds. As noted before, $\kk{G}\cup\okk{G}=\age{G}$ for any graph, so condition \ref{DisjCond} says that $\kk{G}$ and $\okk{G}$ form a partition of $\age{G}$. If $\kk{G}$ is upward-closed in $(\age{G},\preceq)$ and $A\in\okk{G}$, then for any $B\preceq A$ it must be the case that $B\in\okk{G}$, as otherwise we obtain $B\in\kk{G}$ and by upward-closedness $A\in\kk{G}$, contradicting $\kk{G}\cap\okk{G}=\varnothing$. The other direction follows from a similar argument.

%Next, we prove condition \ref{okkCond}. If $G$ satisfies \trg, then $\okk{G}$ is empty and \ref{okkCond} is trivially true. If $G$ does not satisfy \trg, then $\okk{G}\neq\varnothing$ by definition, and since $G$ is HH, the domain of a surjective homomorphism whose image is in $\okk{G}$ has to be in $\okk{G}$.

If $G$ is HH-homogeneous and $A\in\fin{G}$ has a cone $c\in G$, then for any surjective homomorphism $h:A\to B$, an extension $H$ will necessarily map $c$ to a cone over $B$, so condition \ref{kkCond} is satisfied.

For the converse, suppose that conditions \ref{DisjCond} and \ref{kkCond} hold and let $f:A\to B$ be a surjective homomorphism between finite induced subgraphs of $G$. We will show that $f$ can be extended to any superset $A\cup\{a\}$ as a homomorphism. 

We need only to consider the cases $B\in\kk{G}$ and $B\in\okk{G}$. If $B\in\kk{G}$, then the new vertex $a$ can be mapped to any cone $c$ over $B$, and the resulting function is still a homomorphism. And if $B\in\okk{G}$, then $A\in\okk{G}$ by Condition \ref{okkCond} and given any $c\notin A$:
\begin{enumerate}
\item{If $c\not\sim v$ for all $v\in A$, it follows that $f\cup\{(c,d)\}$ is a homomorphism for any $d$.}
\item{If $N(c)\cap A\neq\varnothing$, then $c$ is a cone over $C\coloneqq N(c)\cap A$ and $f|_C$ is a surjective homomorphism from $C$ to its image, so $f[C]$ has a cone $d$ by condition \ref{kkCond}. Since $d\notin f[C]$ (which happens automatically because the edge relation implies inequality), $f'\coloneqq f\cup\{(c,d)\}$ is a homomorphism and extends $f$.}
\end{enumerate}
This concludes our proof.
\end{proof}

We will use $\ndo{G}$ to denote the endomorphism monoid of $G$.

\begin{proposition}\label{prop:clquecond}
If $G$ is a countable HH-homogeneous graph with vertices of infinite degree, then for every $C\in\kk{G}$ there exists an infinite clique $K\subset G$ such that every vertex of $K$ is a cone over $C$.\label{CliqueCond}
\end{proposition}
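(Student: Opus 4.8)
The plan is to reformulate the conclusion geometrically: a vertex is a cone over $C$ precisely when it lies in $N(C)$, so an infinite clique of cones over $C$ is exactly an infinite clique inside the induced subgraph on $N(C)$, which is itself HH-homogeneous by Proposition~\ref{prop:nbhdhh}. Thus the task is to produce an infinite clique in $N(C)$. By Proposition~\ref{prop:infdegree} every vertex of $G$ has infinite degree, and I would re-run the Ramsey dichotomy from its proof at the level of $G$: either $G$ has property \trg, or every neighbourhood of $G$ contains an infinite clique.

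In the first case the construction is immediate. Since $C\in\kk{G}$ it has a cone $c_0$; applying \trg to $C\cup\{c_0\}$ yields a cone $c_1$, which is adjacent to $c_0$ and to all of $C$; inductively $C\cup\{c_0,\dots,c_{k-1}\}$ has a cone $c_k$. The vertices $c_0,c_1,\dots$ are then pairwise adjacent and each is adjacent to all of $C$, so they form the required infinite clique.

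In the second case I would first record the key observation that no vertex of $G$ is adjacent to an infinite independent set: if $v$ were adjacent to such a set, the argument of the first case of Proposition~\ref{prop:infdegree} would force \trg. Consequently, fixing $x\in C$ and using $N(C)\subseteq N(x)$, the graph $N(C)$ contains no infinite independent set, so by Ramsey's theorem it suffices to prove that $N(C)$ is infinite, since an infinite graph with no infinite independent set must contain an infinite clique. With this in hand, the whole statement reduces to the closure property that I will call the \emph{crux}: if $D\in\kk{G}$ and $d$ is a cone over $D$, then $D\cup\{d\}\in\kk{G}$ (equivalently, a nonempty common neighbourhood is always infinite). Granting the crux, the greedy construction of the previous paragraph again builds the clique, because at each stage $C\cup\{c_0,\dots,c_{k-1}\}$ lies in $\kk{G}$ and hence has a further cone.

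I expect the crux to be the main obstacle, and the upward-closure of $\kk{G}$ from Proposition~\ref{prop:3Conditions} is by itself insufficient to establish it: because $d$ is adjacent to every vertex of $D$, it cannot be identified with any of them under a homomorphism without creating a loop, so $D\cup\{d\}$ never surjects onto a smaller graph in a way that would place it in $\kk{G}$. This is exactly why the infinite-degree hypothesis must enter. The approach I would take is to pass to $H':=N(d)$, which is HH-homogeneous by Proposition~\ref{prop:nbhdhh}, infinite, and free of infinite independent sets (an infinite independent set in $N(d)$ would be one adjacent to $d$); hence $H'$ contains an infinite clique, has a vertex of infinite degree, and therefore, by Proposition~\ref{prop:infdegree}, all infinite degrees. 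Finding a common neighbour of $D$ and $d$ then amounts precisely to showing $D\in\kk{H'}$. When $D$ is a clique this is immediate, since a copy of $D$ can be placed inside the infinite clique of $H'$, where it visibly has a cone, and well-definedness of $\kk{H'}$ transfers this to the given copy. The genuinely hard step is the non-clique case, where I would try to induct on a complexity measure of $G$ that strictly decreases upon passing to a neighbourhood---most naturally the maximal independence number $\oo{G}$ of a neighbourhood---thereby invoking a smaller instance of the statement inside $H'$; this is presumably the point at which the paper's finiteness machinery for $\oo{G}$ must be brought to bear.
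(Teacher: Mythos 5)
Your reduction is sound as far as it goes: the \trg case, the Ramsey argument in the complementary case, and the isolation of the ``crux'' (if $D\in\kk{G}$ and $d$ is a cone over $D$, then $D\cup\{d\}\in\kk{G}$) are all correct, and the crux is indeed exactly the content of the paper's proof. But your proposal does not prove the crux. You explicitly defer its general (non-clique) case to an induction on $\oo{G}$ that is never carried out, and as sketched it cannot work: it is circular, because ``invoking a smaller instance of the statement inside $H'=N(d)$'' for $D$ requires as input that $D\in\kk{H'}$, i.e.\ that $D$ has a cone inside $N(d)$ --- which is precisely the assertion (a common neighbour of $D\cup\{d\}$) you are trying to establish. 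In addition, the inequality $\oo{H'}<\oo{G}$ that the induction would need is nowhere justified: an independent set of size $\oo{G}$ inside $N(v)\cap N(d)$ does not obviously extend to one of size $\oo{G}+1$ in some neighbourhood of $G$. So the proof has a genuine gap at exactly the point you flag as the main obstacle.

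Moreover, the reason you give for abandoning the natural route is mistaken. You argue that upward-closure of $\kk{G}$ (Proposition \ref{prop:3Conditions}) cannot yield the crux because $d$ cannot be identified with any vertex of $D$ without creating a loop; but upward-closure does not ask you to collapse $D\cup\{d\}$ onto something smaller --- it asks you to exhibit \emph{some} member of $\kk{G}$ admitting a surjective homomorphism \emph{onto} $D\cup\{d\}$, and a bijective, non-isomorphic homomorphism does the job. This is the paper's entire argument: since $d$ has infinite degree, there is $u\in N(d)\setminus D$; the map $h\colon D\cup\{u\}\to D\cup\{d\}$ fixing $D$ and sending $u\mapsto d$ is a homomorphism (every pair $(d,x)$ with $x\in D$ is an edge because $d$ is a cone over $D$), and $D\cup\{u\}$ lies in $\kk{G}$ because $d$ itself is a cone over it; extending $h$ to some $H\in\ndo{G}$, the vertex $H(d)$ is adjacent to every vertex of $D$ and to $d$, hence is a cone over $D\cup\{d\}$. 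With the crux in hand, your greedy construction finishes the proof (the paper phrases this as: $N(D)$ is an infinite HH-homogeneous graph with no finite $\subseteq$-maximal cliques, hence contains an infinite clique). The single missing idea is to use the cone $d$ itself as the image of one of its own neighbours.
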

\begin{proof}
Take $C\in\fin{G}$ and suppose that $c\in G$ is a cone over $C$. Since the degree of $c$ is infinite, there exists $u\in N(c)\setminus C$. The mapping $h\colon C\cup\{u\}\to C\cup\{c\}$ given by 
\[
h(v)=\begin{cases}
v&\text{if } v\in C \\
c&\text{if } v=u
\end{cases}
\]
is a homomorphism, so there is $H\in\ndo{G}$ that extends it. The image of $c$ under $H$ is a cone over $C\cup\{c\}$. This argument proves that the set of cones over $C$ is an infinite HH-homogeneous graph (Proposition \ref{prop:homneighs}) that contains no finite $\subseteq$-maximal cliques. Since every clique is contained in a maximal one, there exists an infinite clique consisting of cones over $C$.
\end{proof}

The complete bipartite graph $K_{1,n}$ is sometimes called an $n$-star. The following result is part of Proposition 2.1 in \cite{CameronNesetril:2006}.
\begin{proposition}\label{prop:alphabound}
Suppose that $G$ is an HH-homogeneous graph. If $G$ does not satisfy \trg, then there is a finite $N$ such that $G$ does not embed $n$-stars for any $n>N$. 
\end{proposition}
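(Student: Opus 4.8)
The plan is to argue by contraposition: I will show that if $G$ embeds an $n$-star for every positive integer $n$, then $G$ has property \trg. First I would record the elementary observation that an induced copy of $K_{1,n}$ in $G$ is nothing but a vertex $c$ together with an independent set of size $n$ inside $N(c)$; in the language of the age, this says exactly that the edgeless graph $\overline{K_n}$ has a cone in $G$, i.e.\ $\overline{K_n}\in\kk{G}$. I would also note that the set $S=\{n:G\text{ embeds }K_{1,n}\}$ is downward closed, since deleting a leaf of an induced $K_{1,n}$ leaves an induced $K_{1,n-1}$. Hence once the contrapositive is established, a $G$ without \trg has $S\neq\N$, so $S$ is a bounded initial segment of $\N$ and its supremum is the desired finite $N$.

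The heart of the argument is a remark about the order $\preceq$: because $\overline{K_n}$ has no edges, \emph{every} function with domain $\overline{K_n}$ is a homomorphism, so for any $B\in\age{G}$ with $|B|\le n$ there is a surjective homomorphism $\overline{K_n}\to B$ (pick any surjection of the underlying vertex sets). Thus $\overline{K_n}\preceq B$ for every member of the age on at most $n$ vertices. Since $G$ is HH-homogeneous, $\kk{G}$ is upward closed in $(\age{G},\preceq)$ by Proposition~\ref{prop:3Conditions}, and combining this with $\overline{K_n}\in\kk{G}$ shows that every $B\in\age{G}$ with at most $n$ vertices lies in $\kk{G}$; equivalently, every induced subgraph of $G$ on at most $n$ vertices has a cone.

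Letting $n$ grow then gives that \emph{every} finite induced subgraph of $G$ has a cone, which is precisely property \trg, completing the contrapositive. I expect no serious obstacle here: the only point requiring care is the $\preceq$-minimality of edgeless graphs, namely that an independent set surjects homomorphically onto any graph of the same or smaller size, which is what lets a single large star flood $\kk{G}$ with all small members of the age. If one prefers to avoid invoking Proposition~\ref{prop:3Conditions} (and the countability hypothesis it carries), the same conclusion follows directly from the definition of HH-homogeneity: given a cone $c$ over an independent set $\{x_1,\dots,x_n\}$ and any $B=\{b_1,\dots,b_m\}\in\age{G}$ with $m\le n$, map the $x_i$ onto $B$ by a surjection (a homomorphism, as the $x_i$ are independent), extend it to an endomorphism $H$ of $G$, and observe that $H(c)$ is adjacent to every $b_j$ and, edges being irreflexive, distinct from each of them, hence a cone over $B$.
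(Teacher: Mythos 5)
Your proposal is correct, and your fallback argument is in fact exactly the paper's proof: the paper disposes of this proposition in one line by invoking Case 1 of Proposition~\ref{prop:infdegree}, i.e., for each finite $A\subset G$ take a star with $|A|$ leaves, map the leaves bijectively onto $A$ (a homomorphism, since the leaves are independent), extend to an endomorphism, and observe that the image of the star's centre is a cone over $A$. Your primary route, however, is genuinely different in packaging: you pass through the order-theoretic characterisation of Proposition~\ref{prop:3Conditions}, using the observation that an edgeless graph is $\preceq$-below every member of $\age{G}$ of no larger size, so a single large star ``floods'' $\kk{G}$ with all small members of the age via upward-closedness. This is an attractive structural reformulation, but it carries two costs that your direct argument avoids: Proposition~\ref{prop:3Conditions} is stated for countable $G$ while Proposition~\ref{prop:alphabound} is not (you correctly flag this), and membership of $B$ in $\kk{G}$ a priori only says \emph{some} copy of $B$ has a cone, so identifying ``$B\in\kk{G}$'' with ``every copy of $B$ has a cone'' silently uses condition 1 of Proposition~\ref{prop:3Conditions} as well. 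On the other side of the ledger, you are more careful than the paper about the bookkeeping step, namely that the set of $n$ for which $G$ embeds $K_{1,n}$ is downward closed, which is what turns ``embeds arbitrarily large stars'' into ``embeds all stars'' and produces the finite bound $N$; the paper leaves this implicit. One cosmetic slip in your direct argument: you write $B=\{b_1,\dots,b_m\}\in\age{G}$ but then treat $B$ as a subset of $G$ over which $H(c)$ is a cone; you should take $B\in\fin{G}$, since \trg is a statement about actual finite subsets of $G$, not abstract isomorphism types.
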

\begin{proof}
If $G$ embeds arbitrarily large $n$-stars, then we can follow the proof of Case 1 in Proposition \ref{prop:infdegree} to show that $G$ satisfies \trg.
\end{proof}

\begin{definition}
Let $G$ be a graph. Define the \emph{star number} of $G$ as $$\oo{G}\coloneqq\sup\{\alpha(N(v)):v\in G\}$$ if the supremum is finite, and as $\infty$ otherwise.
\end{definition}

We can rephrase Proposition \ref{prop:alphabound} above as saying that if $G$ is HH-homogeneous with \ntrg, then $\oo{G}$ is finite. In particular, there exists a finite $n$ such that each $N(v)$ is a $\overline{K_n}$-free HH-homogeneous graph (Proposition \ref{prop:homneighs}).

\begin{definition}\label{def:coord}
With the notions from above:
\begin{enumerate}
\item{If $D,X$ are subsets of $G$, we say that $D$ \emph{dominates} $X$ if $$X\subseteq\bigcup\{N(d):d\in D\}.$$ A set $D$ is a \emph{dominating set of }$G$ if $D$ dominates $G\setminus D$.}
\item{Let $G$ be a graph with finite star number $\oo{G}\geq 1$ and $I\subseteq G$. We say that $I$ is a \emph{directory} of $G$ if $I$ is an independent dominating set of $G$ and 
\begin{enumerate}
\item{$|I|=\alpha(G)$ if $\alpha(G)$ is finite, or}
\item{$|I|\geq2\oo{G}-1$ if $\alpha(G)$ is infinite.}
\end{enumerate}}
\item{Let $I$ be a directory of $G$. The \emph{address of $x\in G$ with respect to $I$}, denoted by $\rm{address}_I(x)$ or simply $\coord x$, is given by $$\rm{address}_I(x)=\begin{cases} N(x)\cap I&\mbox{ if } x\notin I\\ \{x\}&\mbox{ otherwise.}\end{cases}$$ We will write $\rm{address}_I(A)$ instead of $\bigcup\{\rm{address}_I(a):a\in A\}$.}
\end{enumerate} 
\end{definition}

Every maximal independent set in $G$ dominates $G$, but not every maximal independent set is a directory. The two notions are not very far from one another, however. There are only two ways in which a maximal independent subset $I$ of a graph $G$ with finite star number fails to be a directory, namely:
\begin{enumerate}
\item{$I$ contains fewer than $\alpha(G)$ elements and $\alpha(G)$ is finite, or}
\item{$I$ contains fewer than $2\sigma(G)-1$ elements and $\alpha(G)$ is infinite.}
\end{enumerate}

We also remark that not all maximal independent subsets of an HH-homo\-geneous graph have the same size. The difference can be seen in the original examples of connected HH-homogeneous graphs with \ntrg from \cite{RusinovSchweitzer:2010}, which we reproduce below. 

\begin{example}\label{ex:rs}
Given $n\geq 3$, take an independent set $A_n=\{a_0,\ldots,a_{n-1}\}$ and an infinite clique disjoint from $A_n$, partitioned into $n$ infinite subsets $C_0,\ldots, C_{n-1}$. Finally, add all edges of the form $\{c,a_j\}$ where $c\in\bigcup\{C_i:i\neq j\}$, and call the resulting graph $RS(n)$. It is clear from the construction that the finite subgraph $A_n$ does not have a cone in $RS(n)$, and HH-homogeneity is easy to verify using Proposition \ref{prop:3Conditions}. In $RS(n)$, each pair $\{c,a_i\}$ with $c\in C_i$ is a maximal independent set, but $A_n$ is the only directory of $RS(n)$.
\end{example}

In an HH-homogeneous graph with infinite independence number, we cannot simply require the size of the directory to be $\alpha(G)$ because HH-homogeneity does not guarantee the existence of an infinite independent set in $G$ when $\alpha(G)$ is infinite. All we know in that case is that $G$ embeds arbitrarily large finite independent sets. Our next example illustrates this point.

\begin{example}
Let $G$ be the complement of the disjoint union of $\{K_n:n\in\omega\}$. Then $G$ satisfies \trg and is therefore HH-homogeneous, has infinite independence number, and does not embed an infinite independent set.
\end{example}

\begin{notation}
Let $G$ be a graph with finite star number. If $I$ is a directory of $G$ and $S\subseteq I$, then $K_S\coloneqq\{v\in G:N(v)\cap I=S\}$. We call $K_S$ the \emph{exact neighbourhood} of $S$ (with respect to $I$). 
\end{notation}

\begin{lemma}\label{lem:nsks}
Let $G$ be a graph with finite star number. If $I$ is a directory of $G$, then for any $S\in{I\choose{\oo{G}}}$, $K_S=N(S)$.
\end{lemma}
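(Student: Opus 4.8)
The plan is to prove the set equality by establishing the two inclusions $K_S\subseteq N(S)$ and $N(S)\subseteq K_S$ separately. The first inclusion is immediate from the definitions and does not even use the hypothesis $|S|=\oo{G}$: if $v\in K_S$, then by definition $N(v)\cap I=S$, so in particular $v$ is adjacent to every element of $S$, which is precisely the assertion $v\in N(S)$. I would dispatch this direction in a single sentence.

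The reverse inclusion $N(S)\subseteq K_S$ is where the size hypothesis $|S|=\oo{G}$ does the work, and I would argue by contradiction. Take $v\in N(S)$, so that $v$ is adjacent to all of $S$. Since $S\subseteq I$ and $I$ is independent, $S$ is itself an independent set contained in $N(v)$; note also that $v\notin I$, since otherwise $v$ would be adjacent to the nonempty set $S\subseteq I$, contradicting the independence of $I$ (here $S\neq\varnothing$ because $\oo{G}\geq 1$ is part of the standing hypothesis for directories). To conclude $v\in K_S$ it then remains only to show that $v$ has no neighbour in $I\setminus S$. Suppose toward a contradiction that $v\sim t$ for some $t\in I\setminus S$. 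Then $S\cup\{t\}$ is an independent subset of $I$, all $\oo{G}+1$ of whose elements lie in $N(v)$, so that $\alpha(N(v))\geq\oo{G}+1$. This contradicts the defining property of the star number, namely $\alpha(N(w))\leq\oo{G}$ for every vertex $w\in G$. Hence $N(v)\cap I=S$, giving $v\in K_S$.

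I do not expect a genuine obstacle: the whole argument is a direct application of the interpretation of $\oo{G}$ as a uniform upper bound on the independence numbers of neighbourhoods, exactly as recorded after Proposition \ref{prop:alphabound}. The only points that call for a little care are the edge cases — verifying that $S$ is nonempty (so that the containment is not vacuous and $v\notin I$), and keeping track of the fact that the easy inclusion $K_S\subseteq N(S)$ holds for arbitrary $S$, whereas the hard inclusion genuinely requires $|S|=\oo{G}$: for a smaller set $S$ one could have a common neighbour $v$ possessing an additional independent neighbour $t\in I\setminus S$ without violating the star-number bound, so that $N(S)\setminus K_S$ need not be empty.
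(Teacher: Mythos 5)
Your proof is correct and follows essentially the same route as the paper's: the inclusion $K_S\subseteq N(S)$ is immediate, and for $N(S)\subseteq K_S$ one notes that a vertex $v\in N(S)$ with an additional neighbour in $I\setminus S$ would have an independent set of size $\oo{G}+1$ inside $N(v)$, contradicting the definition of the star number. Your extra remarks (that $v\notin I$ and $S\neq\varnothing$) are harmless but not needed for the argument.
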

\begin{proof}
It is clear that $K_S\subseteq N(S)$ even when $|S|<\oo{G}$. Consider $v\in N(S)$; if $v\in N(S)\setminus K_S$, then $S\subsetneq N(v)\cap I$, and so $\alpha(N(v))>\oo{G}$, impossible. It follows that $v\in K_S$ and the exact and common neighbourhoods of $S$ are equal.
\end{proof}

\begin{proposition}\label{prop:DisjNoEdges}
Let $G$ be a graph with finite star number, and suppose that $I$ is a directory of $G$. Then for any disjoint $S,T\in{I\choose\oo{G}}$, there are no edges $v\sim w$ with $v\in K_S$ and $w\in K_T$.
\end{proposition}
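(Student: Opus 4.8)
The plan is to argue by contradiction, exploiting the defining bound of the star number $\oo{G}$ as the supremum of $\alpha(N(v))$ over all vertices $v$. Suppose toward a contradiction that we have disjoint $S,T\in\binom{I}{\oo{G}}$ together with vertices $v\in K_S$ and $w\in K_T$ such that $v\sim w$. The whole strategy is to produce, inside the single neighbourhood $N(v)$, an independent set of size $\oo{G}+1$, which immediately contradicts the definition of $\oo{G}$.

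First I would record the two facts that come for free from membership in the exact neighbourhoods. Since $v\in K_S$ we have $N(v)\cap I=S$, so in particular $S\subseteq N(v)$; and because $S\subseteq I$ and $I$ is independent, $S$ is itself an independent set, necessarily of maximum size $\oo{G}$ inside $N(v)$. Similarly $w\in K_T$ gives $N(w)\cap I=T$. The key step is then to observe that $w$ is adjacent to \emph{no} element of $S$: for any $s\in S\subseteq I$, the edge $s\sim w$ would force $s\in N(w)\cap I=T$, contradicting $S\cap T=\varnothing$. Combining this with the hypothesis $v\sim w$, the set $S\cup\{w\}$ lies entirely in $N(v)$ (as $S\subseteq N(v)$ and $w\in N(v)$) and is independent (the elements of $S$ are pairwise non-adjacent, and $w$ is non-adjacent to each of them). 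Hence $\alpha(N(v))\geq\oo{G}+1$, contradicting $\oo{G}=\sup\{\alpha(N(v)):v\in G\}$.

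The one bookkeeping point I would check to make the size count honest is that $w\notin S$, so that $S\cup\{w\}$ genuinely has $\oo{G}+1$ elements. This holds because $T\neq\varnothing$ (since $|T|=\oo{G}\geq 1$), so $N(w)\cap I\neq\varnothing$ and therefore $w\notin I$, while $S\subseteq I$. I do not expect any serious obstacle here: the argument is a short extremal-set contradiction, and the only substantive idea is the translation of disjointness of $S$ and $T$ into non-adjacency of $w$ to all of $S$. Lemma \ref{lem:nsks} is not strictly needed for this count, though it provides the same intuition that $S$ already saturates the independence number of $N(v)$, leaving no room for an extra non-neighbour such as $w$.
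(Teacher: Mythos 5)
Your proof is correct and is essentially identical to the paper's: both derive a contradiction by showing that $S\cup\{w\}$ (the paper writes $\{w\}\cup\mathrm{address}(v)$, which is the same set) is an independent set of size $\oo{G}+1$ inside $N(v)$, using disjointness of $S$ and $T$ to rule out edges from $w$ into $S$. Your extra check that $w\notin I$ (hence $w\notin S$) is a point the paper leaves implicit, but the argument is the same.
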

\begin{proof}
Suppose that $S,T\in{I\choose\oo{G}}$ are disjoint, an let $v\in K_S$, $w\in K_T$. If $v\sim w$, then $\{w\}\cup\coord v$ would be an independent set of size $\oo{G}+1$ contained in $N(v)$, impossible by the definition of $\oo{G}$ (see Figure \ref{fig:fig2}).
\begin{figure}[h!]
\centering
\includegraphics{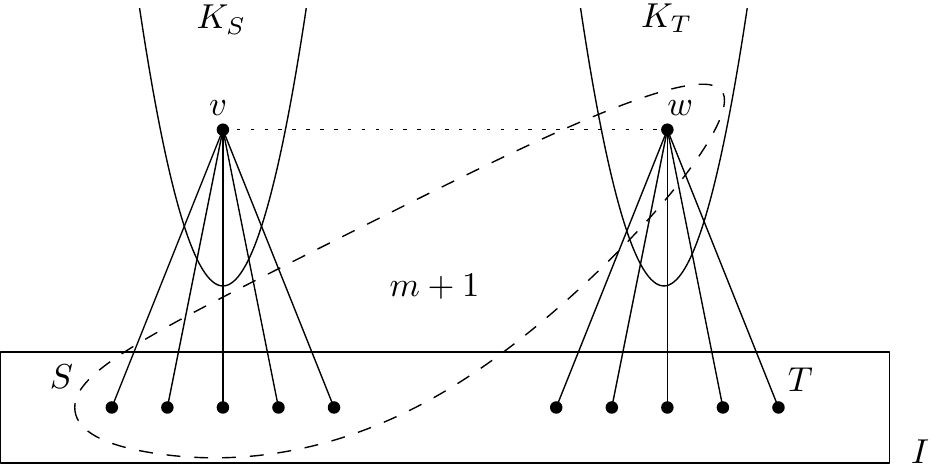}
\caption{Proposition \ref{prop:DisjNoEdges}. If the dotted line were an edge, then the dashed set would be an independent subset of size $\oo{G}+1$ in the neighbourhood of $v$.}\label{fig:fig2}
\end{figure}
\end{proof}

\begin{lemma}\label{lem:HartmansTrick}
Let $G$ be a countably infinite HH-homogeneous graph with vertices of infinite degree and finite star number satisfying $\alpha(G)\geq 2\oo{G}-1$. Suppose that $I$ is a directory of $G$. Then for all non-disjoint $S,T\in{I\choose\oo{G}}$ and all $v\in K_S$, the set $N(v)\cap K_T$ is infinite.
\end{lemma}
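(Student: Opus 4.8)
The plan is to reduce the statement to a single membership assertion in $\kk{G}$ and then upgrade one witness to infinitely many. First I note that since $T\in{I\choose\oo{G}}$, Lemma~\ref{lem:nsks} gives $K_T=N(T)$, so a vertex lies in $N(v)\cap K_T$ exactly when it is adjacent to $v$ and to every element of $T$, i.e.\ when it is a cone over the finite subgraph $\{v\}\cup T$. Hence it suffices to prove that $\{v\}\cup T\in\kk{G}$, because $G$ has vertices of infinite degree and so, once $\{v\}\cup T\in\kk{G}$, Proposition~\ref{prop:clquecond} produces an \emph{infinite} clique of cones over $\{v\}\cup T$; every vertex of that clique lies in $N(v)\cap N(T)=N(v)\cap K_T$, which is therefore infinite. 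So the entire content of the lemma is the existence of one common neighbour of $v$ and $T$.

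To set up the target, I would first record its shape and some cheap members of $\kk{G}$. Writing $U=S\cap T$ and $k=|U|\ge 1$, the fact that $v\in K_S$ means $v\sim s$ for all $s\in S$ while $v$ is non-adjacent to every vertex of $I\setminus S$; thus $\{v\}\cup T$ is an independent set $T$ of size $\oo{G}$ together with an apex $v$ adjacent to exactly the $k$ vertices of $U$ (a $k$-star disjoint from an independent set of size $\oo{G}-k$), and its independence number is $\oo{G}$, so the star number does not forbid a cone. Both $S$ and $T$ are isomorphic to $\overline{K_{\oo{G}}}$ and lie in $\kk{G}$ (indeed $v$ is a cone over $S$). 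Using the infinite clique of cones furnished by Proposition~\ref{prop:clquecond}, one also sees that the apex-over-a-full-independent-set graph lies in $\kk{G}$: taking two cones $c_1,c_2$ over an independent $\oo{G}$-set, $c_2$ is a cone over that set together with $c_1$.

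The promising approach to produce the cone uses an endomorphism. Fix a bijection $\phi\colon S\to T$ with $\phi|_U=\mathrm{id}$ and $\phi(S\setminus T)=T\setminus S$. Since $S$ and $T$ are independent, $\phi$ is a homomorphism between finite induced subgraphs, so by HH-homogeneity it extends to an endomorphism $F\in\ndo{G}$. Because $v\sim s$ for every $s\in S$, we get $F(v)\sim\phi(s)$ for every $s$, i.e.\ $F(v)$ is adjacent to all of $T$, so $F(v)\in N(T)=K_T$. Thus $F$ carries $v$ (and, more generally, the infinite clique of cones over $S$ containing $v$) into $K_T$. To finish I would then try to arrange that the extension can be chosen so that $v$ is adjacent to the image $F(v)$, or, equivalently, to exhibit $C^{*}\in\kk{G}$ admitting a surjective homomorphism onto $\{v\}\cup T$ and invoke upward-closedness of $\kk{G}$ from Proposition~\ref{prop:3Conditions}; here the hypothesis $\alpha(G)\ge 2\oo{G}-1$ enters, since $|S\cup T|=2\oo{G}-k\le 2\oo{G}-1\le|I|$ guarantees inside $G$ the independent sets needed to realise such an auxiliary configuration and a cone over it.

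The step I expect to be the main obstacle is producing that single common neighbour, and the difficulty is genuine rather than formal. The root cause is the asymmetry that $v$ is adjacent to \emph{all} of $S$ but only to $U=S\cap T\subsetneq T$: any homomorphism fixing $v$ and sending $S\setminus T$ onto $T\setminus S$ must map the edges from $v$ to non-edges, hence fails, and the endomorphism $F$ above moves $v$, so nothing ties $F(v)$ back to an adjacency with $v$. Moreover the upward-closedness route cannot by itself close the gap when $k=1$: a short analysis of pre-images shows that any finite structure with independence number at most $\oo{G}$ that surjects onto $\{v\}\cup T$ must already contain an induced copy of $\{v\}\cup T$ carrying its own cone, so the argument is circular unless a cone is exhibited directly. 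Overcoming this — exploiting HH-homogeneity together with the extra independent vertices guaranteed by $\alpha(G)\ge 2\oo{G}-1$ to build the cone honestly — is exactly where the real work lies.
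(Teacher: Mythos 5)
Your reduction is correct and is in fact a genuine simplification of part of the paper's argument: since $K_T=N(T)$ for $T\in{I\choose\oo{G}}$ (Lemma~\ref{lem:nsks}), the set $N(v)\cap K_T$ is precisely the set of cones over $\{v\}\cup T$, and once a \emph{single} such cone exists, Proposition~\ref{prop:clquecond} immediately produces an infinite clique of cones over $\{v\}\cup T$, all of which lie in $N(v)\cap K_T$. This one-line upgrade replaces the paper's last two claims, which obtain infiniteness by a separate argument (a finite-versus-infinite homomorphism comparison followed by a clique-into-finite-set contradiction). Your handling of the case $S=T$ via two cones over an independent $\oo{G}$-set is also fine.

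However, the heart of the lemma --- the existence of one common neighbour of $v$ and $T$ when $\varnothing\neq S\cap T\subsetneq S$ --- is exactly what you leave unproved, and you say so yourself; so the proposal has a genuine gap, and it sits at the only hard step. The paper closes it by induction on $|S\cap T|$. In the base case $|S\cap T|=1$, say $S\cap T=\{c\}$ (Claim~\ref{clm:manyedges1}, with the roles of $S$ and $T$ interchanged), one considers the hybrid set $D=(T\setminus S)\cup\{v\}$: it is independent of size $\oo{G}$ because $N(v)\cap I=S$, hence isomorphic to $T$, hence in $\kk{G}$ by condition~1 of Proposition~\ref{prop:3Conditions}, so it has a cone $z$. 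The star number is then used twice: since $S\cup\{z\}\subseteq N(v)$, the vertex $z$ must have \emph{at least} one neighbour in $S$ (else $N(v)$ contains an independent set of size $\oo{G}+1$), and since $z$ already has the $\oo{G}-1$ independent neighbours $T\setminus S$, it has \emph{at most} one neighbour in $S$. So $z$ has exactly one neighbour $d\in S$; if $d=c$ we are done, and otherwise the homomorphism fixing $(T\setminus S)\cup\{v\}$ pointwise and sending $d\mapsto c$ extends to an endomorphism carrying $z$ to a cone over $T\cup\{v\}$. The inductive step trades one vertex of $S\cap T$ for a fresh vertex $w\in I\setminus(S\cup T)$, whose existence is the only place the hypothesis $\alpha(G)\geq 2\oo{G}-1$ is used, and then pushes $w$ back onto the traded vertex by an endomorphism. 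Note that this is structurally different from your attempted map $\phi\colon S\to T$: the point is not to move $v$ at all, but to build a cone on the $T$-side whose single ``stray'' edge into $S$ is then rotated into place --- precisely the mechanism your proposal identifies as missing but does not supply.
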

\begin{proof}
If $S=T$, then $K_S$ is an infinite HH-homogeneous graph (Propositions \ref{prop:homneighs} and \ref{prop:clquecond}) with finite independence number (bound by the star number of $G$), so all its vertices have infinite degree within $K_S$ (Propositions \ref{prop:infdegree} and \ref{prop:clquecond}), and thus the result holds in this case. We may now assume $S\neq T$.

Write $m$ for $\oo{G}$. The proof consists of two pairs of claims of increasing strength. We start with the simplest case:
\begin{claim}\label{clm:manyedges1}
If $S,T\in{I\choose m}$ satisfy $|S\cap T|=1$, then every $v\in K_T$ has a neighbour in $K_S$. 
\end{claim}
\begin{proof}
Let $S=\{c_1,\ldots,c_m\}$ and $T=\{c_m,\ldots,c_{2m-1}\}$. Take $u\in K_S$ and $v\in K_T$, and consider the set $D=\{c_1,\ldots,c_{m-1},v\}$. Since $c_m$ is the only element common to $S$ and $T$, $D$ is an independent set of size $m$. By HH-homogeneity, $D$ has a cone $z$. Observe that $z\notin I$ because it has edges to elements of $I$. Observe also that if $N(z)\cap T=\varnothing$, then $\{z,c_m,\ldots,c_{2m-1}\}$ would be an independent set of size $m+1$ contained in $N(v)$, which is a contradiction, so there must be exactly one edge with one endpoint $z$ and another $d\in T$ (see Figure \ref{fig:fig3}). An endomorphism of $G$ extending the homomorphism $f:\{c_1,\ldots,c_{m-1},d,v\}\to S\cup\{v\}$ that fixes $c_1,\ldots,c_{m-1},v$ and maps $d\mapsto c_m$ sends $z$ to a neighbour of $v$ in $K_S$.
\begin{figure}[h!]
\centering
\includegraphics{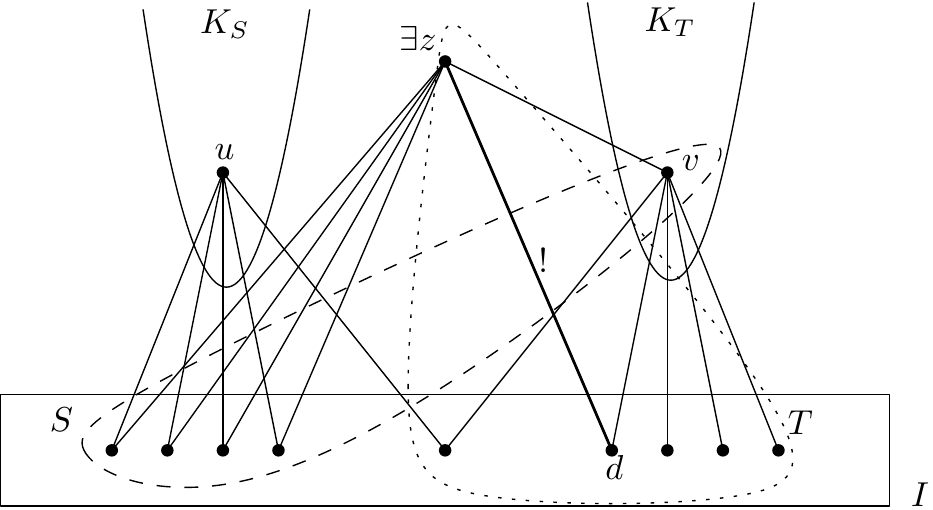}
\caption{Claim \ref{clm:manyedges1}. By HH-homogeneity, the dashed set has a cone $z$. If the heavier edge were not there, then the dotted set would be a large independent set in $N(v)$, so exactly one such edge is forced.}\label{fig:fig3}
\end{figure}
\end{proof}
\begin{claim}
If $S,T\in{I\choose m}$ satisfy $S\cap T\neq\varnothing$, then every $v\in K_T$ has a neighbour in $K_S$. 
\end{claim}
\begin{proof}
Claim \ref{clm:manyedges1} is the first step of an inductive argument on $|S\cap T|$ up to $|S\cap T|=m-2$. 

Let $1\leq k\leq m-3$, such that every $v\in K_T$ has a neighbour in $K_S$ if $|S\cap T|=k$. Let $S,T\in{{I}\choose m}$ with $|S\cap T|=k+1$. Pick a vertex $x\in S\cap T$. Because $k+1\geq 2$ and $\alpha(G)\geq2\oo{G}-1$, there exists $w\in I\setminus(S\cup T)$. Now the independent set $S'=(S\cup\{w\})\setminus\{x\}$ has $m$ elements and $|S'\cap T|=k$, and so $K_{S'}$ contains a neighbour $u'$ of $v$, by the induction hypothesis. Define $f:S'\cup\{v\}\to S\cup\{v\}$ as $$f(s)=\begin{cases}s&\mbox{ if } s\in(S'\cup\{v\})\setminus\{w\}\\
x&\mbox { if } s=w.
\end{cases}$$

This function is a homomorphism, so by HH-homogeneity it has an extension $F\in\ndo{G}$. The image of $S'$ under $F$ is $S$, so the cone $u'$ over $S'$ is mapped by $F$ to a cone over $S$ (that is, an element of $K_S$), which is a neighbour of $v$.

\end{proof}

The preceding two claims, together with Proposition \ref{prop:DisjNoEdges}, show that in a connected HH-homogeneous graph $G$ with finite star number, if $I$ is a directory of $G$, then the structure of the set of vertices with $\oo{G}$ neighbours in $I$ is closely related to that of the intersection graph of ${I\choose{\oo{G}}}$. Indeed, if we define a graph whose vertices are the exact neighbourhoods of the elements of ${I\choose{\oo{G}}}$, with an edge $K_S\sim K_T$ if there exist $v\in K_S$ and $w\in K_T$ such that $v\sim w$ in $G$, then we obtain the intersection graph of ${I\choose{\oo{G}}}$.  

\begin{claim}
If $S,T\in{I\choose m}$ satisfy $S\cap T\neq\varnothing$, then all sets of the form $N(u)\cap K_T$ with $u\in K_S$ are finite or all are infinite.
\end{claim}
\begin{proof}
For any $u,w\in K_S$, the map $h$ fixing $S\cup T$ and sending $w\mapsto u$ is a homomorphism, and any global extension $H$ of $h$ will map $N(w)\cap K_T$ to a subset of $N(u)\cap K_T$. If $N(u)\cap K_T$ is finite and $N(w)\cap K_T$ is infinite then some vertex in $N(u)\cap K_T$ has infinite preimage under $H$ in $N(w)\cap K_T$. The preimage of a vertex under a homomorphism is an independent set, and therefore $\alpha(N(w))$ is infinite, contradicting the finiteness of $\oo{G}$.
\end{proof}

\begin{claim}
If $S,T\in{I\choose m}$ satisfy $S\cap T\neq\varnothing$, then $N(u)\cap K_T$ is infinite for all $u\in K_S$.
\end{claim}
\begin{proof}
Suppose for a contradiction that $N(u)\cap K_T$ is the finite set $A$. Then $u$ is a cone over $S\cup A$, and by Proposition \ref{CliqueCond} there exist infinitely many cones over this set. But being a cone over $S$ is equivalent to being in $K_S$, and so there is an infinite clique $C$ contained in $K_S$ such that for any $a\in A$ and $c\in C$ we have $c\sim a$. 

Now we prove that this is not possible in an HH-homogeneous graph with finite star number. Let $w$ be any element of $A$ and $h\colon S\cup T\cup\{u,w\}\to S\cup T\cup\{u,w\}$ be any bijection that fixes $S\cap T$, maps $T\setminus S$ to $S\setminus T$, sends $w$ to $u$ and vice versa. Then  $h$ is an homomorphism, and an endomorphism $H$ that extends $h$ will map $N(w)\cap K_S$ into $N(u)\cap K_T$. This contradicts our hypothesis because the infinite clique $C$ needs to be mapped by $H$ into the finite graph $A$.

\end{proof}
This concludes the proof of Lemma \ref{lem:HartmansTrick}.
\end{proof}

\begin{definition}
Let $G$ be a graph and $I$ be a directory of $G$. We define the \emph{domination number of $S\in\fin{G}$ over }$I$ (or its $I$-\emph{domination number}) as the value of the function $d_I\colon\fin{G}\to\N$ given by 
\[
d_I(S)=\begin{cases}\min\{|A|:A\subset I\text{ and }A\text{ dominates }S\}&\mbox{ if } S\cap I=\varnothing\\|S\cap I|+d_I(S\setminus B_S)&\mbox{ otherwise.}\end{cases}
\]
In the second case, $B_S=\{x:\exists s\in S\cap I(x\in N(s))\}\cup(S\cap I)$.
\end{definition}

\begin{lemma}\label{lem:WideCliques}
Let $G$ be a countably infinite HH-homogeneous graph with vertices of infinite degree, $\oo{G}\geq 2$, and $\alpha(G)\geq 2\oo{G}-1$, and let $I$ be a directory of $G$. Then there exist copies of $K_3$ in $G$ with $I$-domination number 2.
\end{lemma}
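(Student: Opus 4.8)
The plan is to reduce the statement to the construction of a single well-chosen triangle, after which the domination count is forced. Write $m=\oo{G}$. Since $|I|\geq 2\oo{G}-1$ and $m\geq 2$, I can fix $2m-1$ distinct directory vertices and choose two $m$-element sets $S_1,S_2\in{I\choose m}$ with $S_1\cap S_2=\{c\}$ for a single vertex $c$ (e.g.\ $S_1=\{c,a_2,\dots,a_m\}$, $S_2=\{c,b_2,\dots,b_m\}$ with all listed vertices distinct). Because $\oo{G}=m$ is a finite supremum it is attained, so some neighbourhood contains an independent $m$-set and hence $\overline{K_m}\in\kk{G}$; by Proposition~\ref{prop:3Conditions} \emph{every} copy of $\overline{K_m}$ has a cone, so each $K_{S_i}=N(S_i)$ (Lemma~\ref{lem:nsks}) is nonempty. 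My target is then an edge $v_1\sim v_2$ with $v_i\in K_{S_i}$ together with a common neighbour $v_3\notin I$ of $v_1$ and $v_2$ satisfying $v_3\not\sim c$.

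The domination computation is the routine half. Each of $v_1,v_2,v_3$ lies outside $I$ (no vertex of $I$ lies in any $K_{S_i}$, as $I$ is independent, and $v_3\neq c$ with $v_3\not\sim c$ forces $v_3\notin I$), so the first case of $d_I$ applies to $S=\{v_1,v_2,v_3\}$. A single directory vertex dominates all three iff it lies in $\coord{v_1}\cap\coord{v_2}\cap\coord{v_3}=S_1\cap S_2\cap\coord{v_3}=\{c\}\cap\coord{v_3}=\varnothing$, so $d_I(S)\geq 2$; conversely $c$ dominates $v_1$ and $v_2$ while any vertex of $\coord{v_3}$ (nonempty since $I$ dominates $G\setminus I$) dominates $v_3$, so $d_I(S)\leq 2$. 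Hence the triangle has $I$-domination number exactly $2$.

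To build the configuration I would use Lemma~\ref{lem:HartmansTrick}. As $S_1\cap S_2\neq\varnothing$, choosing $v_1\in K_{S_1}$ and then $v_2\in N(v_1)\cap K_{S_2}$ (an infinite set) produces the edge. It now remains to find a common neighbour $v_3$ of $v_1$ and $v_2$ with $v_3\notin I$ and $v_3\not\sim c$; equivalently, since the common directory neighbours of the edge $\{v_1,v_2\}$ are exactly $S_1\cap S_2=\{c\}$, I need a common neighbour lying in some $K_{S_3}$ with $c\notin S_3$, $S_3\cap S_1\neq\varnothing$, $S_3\cap S_2\neq\varnothing$. Lemma~\ref{lem:HartmansTrick} already supplies that $N(v_1)\cap K_{S_3}$ and $N(v_2)\cap K_{S_3}$ are each infinite, so the whole content is to show that these two infinite subsets of $K_{S_3}$ actually meet.

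The hard part is precisely this last intersection. Lemma~\ref{lem:HartmansTrick} locates neighbours of a \emph{single} vertex inside a prescribed exact neighbourhood, but $v_3$ must be a cone over the \emph{edge} $\{v_1,v_2\}$, and homomorphism-homogeneity can only be used to force adjacencies, never to guarantee the non-adjacency $v_3\not\sim c$ directly. My intended route is to apply Proposition~\ref{prop:clquecond} to the edge $\{v_1,v_2\}$ (a copy of $K_2\in\kk{G}$, since $G$ already contains infinite cliques) to obtain an infinite clique of common neighbours, and then run a counting argument in the spirit of Claim~\ref{clm:manyedges1}: cone over a carefully chosen independent $m$-set built from $v_1$ and suitable directory vertices, use the bound $\alpha(N(\cdot))\leq\oo{G}$ to pin down the cone's adjacencies to $I$ exactly, and finally transport it by an endomorphism so that its image is adjacent to both $v_1$ and $v_2$ while having address $S_3$. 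I expect the genuine obstacle to be realising this apex as a cone over an edge while keeping it off $c$, which is exactly the point at which the preceding lemmas stop short; overcoming it, rather than the bookkeeping above, is where the real work of the proof will lie.
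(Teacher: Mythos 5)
Your setup and the domination bookkeeping are correct, and Lemma~\ref{lem:HartmansTrick} does hand you the edge $v_1\sim v_2$; but the proof stops exactly where the content of the lemma begins: the third vertex $v_3$ is never constructed, and you concede as much. Moreover, the route you sketch genuinely stalls rather than merely being unfinished. The mechanism of Claim~\ref{clm:manyedges1} pins down a cone's address by coning over an \emph{independent} set containing the target vertex; your target is now the edge $\{v_1,v_2\}$, which cannot sit inside an independent set. If instead you cone over $\{v_1,v_2\}\cup A$ with $A\subset S_1\setminus\{c\}$, $|A|=\oo{G}-1$ (granting such a cone exists, which is itself unproved), the bound $\alpha(N(\cdot))\leq\oo{G}$ only pins the cone's address to $A$ plus \emph{one} vertex of $S_2$ --- and that vertex may perfectly well be $c$. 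The transport step of Claim~\ref{clm:manyedges1} cannot then rescue you: it re-addresses a cone by sending its single surplus directory neighbour to a prescribed directory vertex while fixing the rest of the configuration, and here the surplus neighbour is $c$, whose image under any homomorphism fixing $v_1$ and $v_2$ must be a common neighbour of $v_1$ and $v_2$; if that image is to lie in $I$ it must lie in $S_1\cap S_2\setminus\{c\}=\varnothing$. So within your two-address configuration the non-adjacency to $c$ is not attainable by homogeneity, exactly as you suspected.

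The missing idea --- and the way the paper closes precisely this gap --- is to fix \emph{three} full-size addresses in advance, pairwise intersecting but with empty triple intersection (the paper takes $S,T,U$ with $|S\cap T|=\left\lfloor\frac{\oo{G}}{2}\right\rfloor$ and $U\subseteq S\triangle T$; in your notation a suitable $S_3\subseteq(S_1\cup S_2)\setminus\{c\}$ meeting both $S_1$ and $S_2$ works), and then to split into cases so that the non-adjacency one cannot force is never needed: it is either already present or exploited. Fix $v\in K_S$ and, by Lemma~\ref{lem:HartmansTrick}, neighbours $w\in K_T$ and $z\in K_U$ of $v$. If $w\sim z$, the triangle is there and its vertices' addresses have empty triple intersection, so the domination count goes through. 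If $w\not\sim z$, take any edge $w'\sim z'$ with $w'\in K_T$, $z'\in K_U$ (Lemma~\ref{lem:HartmansTrick} again) and observe that the map fixing $S\cup T\cup U$ pointwise and sending $w\mapsto w'$, $z\mapsto z'$ is a homomorphism \emph{precisely because} $w\not\sim z$: the non-edge imposes no constraint. The image $v'$ of $v$ under a global extension is adjacent to $w'$, to $z'$, and to all of $S$, hence lies in $K_S$ (its address, containing the $\oo{G}$-set $S$, can contain nothing more), and $\{v',w',z'\}$ is the desired triangle. Note that homogeneity is never asked to produce a non-adjacency: all needed non-adjacencies come for free from the rigidity of full-size addresses, and the case in which your construction gets stuck is exactly the case in which the transporting map becomes a homomorphism.
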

\begin{proof}
Choose and fix $S,T,U\in{I\choose\oo{G}}$ with $|S\cap T|=\left\lfloor\frac{\oo{G}}{2}\right\rfloor$ and $U\subseteq S\triangle T$, so that $S\cap T\cap U=\varnothing$ and $|S\cup T\cup U|= \oo{G}+\left\lceil\frac{\oo{G}}{2}\right\rceil$. We know from Lemma \ref{lem:HartmansTrick} that any $v\in K_S$ has infinitely many neighbours in $K_T$ and $K_U$, so fix $v\in K_s$, and let $w\in K_T$ and $z\in K_U$ be neighbours of $v$. Suppose that $w\sim z$. Then $v,w,z$ form a copy $C$ of $K_3$. We claim that $d_I(C)=2.$ This follows from two facts: first, any finite $Y\subset G$ is dominated by some $X\subseteq\coord{Y}$; and second, if a single vertex $c\in I$ dominates $Y$, then $c$ is by definition contained in $\bigcap\{N(y)\cap I:y\in Y\}$. But since $S\cap T\cap U=\varnothing$ and these sets are the addresses of the vertices in $C$, we know that the $I$-domination number of $C$ is at least 2. Clearly, the set containing one vertex from $S\cap T$ and one from $T\cap U$ dominates $C$. 

Suppose now that $w\not\sim z$. Let $w'\in K_T$ and $z'\in K_U$ form an edge, and define $f:S\cup T\cup U\cup\{w,z\}\to S\cup T\cup U\cup\{w',z'\}$ be the map fixing $S\cup T\cup U$ pointwise and sending $w\mapsto w', z\mapsto z'$. This is a homomorphism, and the image $v'$ of $v$ under a global extension forms a triangle with $w'$ and $z'$ and is in $K_S$ because $f$ fixes $S$ pointwise. Now the argument from the preceding paragraph proves that $\{w',z',v'\}$ is a copy of $K_3$ with $I$-domination number 2.
\end{proof}

\begin{lemma}\label{lem:Dominating}
Let $G$ be a graph with finite star number and let $I$ be a directory of $G$. If a finite set $X\subset G$ contains a vertex $x$ such that $|\coord{x}|=\oo{G}$, and $z\in G$ is a cone over $X$, then $\coord{z}\cap\coord{x}\neq\varnothing$. In particular, if $X$ consists of vertices with addresses of size $\oo{G}$, then $\coord{z}\cap\coord{X}$ dominates $X$.
\end{lemma}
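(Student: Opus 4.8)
Write $m$ for $\oo{G}$. The plan is to prove the first assertion by contradiction, forcing a forbidden independent set inside a neighbourhood, and then to deduce the ``in particular'' statement by applying the first assertion to each vertex of $X$ separately.

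For the first part, first record that since $z$ is a cone over $X$ and $x\in X$ we have $z\sim x$, so $z\in N(x)$; moreover $\coord{x}=N(x)\cap I$ is an independent set (being a subset of $I$) of size $m$ contained in $N(x)$. Suppose toward a contradiction that $\coord{z}\cap\coord{x}=\varnothing$. I would first dispose of the case $z\in I$: then $\coord{z}=\{z\}$, and since $z\sim x$ we would have $z\in N(x)\cap I=\coord{x}$, forcing $z\in\coord{z}\cap\coord{x}$, a contradiction. Hence $z\notin I$, so $\coord{z}=N(z)\cap I$ and, because $\coord{x}\subseteq I$, the assumption $\coord{z}\cap\coord{x}=\varnothing$ says precisely that no element of $\coord{x}$ is adjacent to $z$. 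As $z\notin I$ is distinct from every vertex of $\coord{x}\subseteq I$, the set $\{z\}\cup\coord{x}$ is then an independent set of size $m+1$ lying inside $N(x)$. This gives $\alpha(N(x))\geq m+1>\oo{G}$, contradicting the definition of the star number; therefore $\coord{z}\cap\coord{x}\neq\varnothing$.

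For the ``in particular'' statement, assume every $y\in X$ has $|\coord{y}|=m$. Fixing $y\in X$ and applying the first part with $x:=y$ (legitimate, since $y\in X$ and $z$ is still a cone over $X$) yields some $d_y\in\coord{z}\cap\coord{y}$. Since $d_y\in\coord{y}\subseteq\coord{X}$ we get $d_y\in\coord{z}\cap\coord{X}$, and since $d_y\in\coord{y}=N(y)\cap I$ we get $y\in N(d_y)$. Thus every $y\in X$ lies in $N(d)$ for some $d\in\coord{z}\cap\coord{X}$, which is exactly the statement that $\coord{z}\cap\coord{X}$ dominates $X$.

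I expect no serious obstacle here; the argument is short. The only point requiring care is the degenerate possibility $z\in I$ in the first part, which is why I separate it out before building the independent set. A secondary subtlety is that in the second part one wants $y\notin I$ so that $\coord{y}=N(y)\cap I$ and the adjacency $y\sim d_y$ is genuine; this is automatic whenever $m\geq 2$, since a vertex of $I$ has address $\{y\}$ of size $1$, and $m\geq 2$ is precisely the regime in which this lemma is applied downstream.
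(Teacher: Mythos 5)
Your proof is correct and follows essentially the same route as the paper's: the key step in both is that $\coord{z}\cap\coord{x}=\varnothing$ would force $\{z\}\cup\coord{x}$ to be an independent set of size $\oo{G}+1$ inside $N(x)$, contradicting the definition of the star number. Your second part simply applies the first assertion vertex-by-vertex instead of re-running the contradiction as the paper does, and your explicit treatment of the degenerate case $z\in I$ fills in what the paper dismisses as ``trivial''; both are cosmetic differences only.
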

\begin{proof}
The result is trivial if $z\in I$, so assume $z\notin I$. If $\coord{z}\cap\coord{x}$ were empty, then $\{z\}\cup\coord{x}$ would be an independent set of size $\oo{G}+1$ in $N(x)$. This is a contradiction, hence the first statement follows.

For the second assertion, suppose for a contradiction that $\coord{z}\cap\coord{X}$ does not dominate $X$. Then there exists $x\in X$ such that $x\notin\bigcup\{N(v):v\in\coord{z}\cap\coord{X}\}$, so $\{z\}\cup\coord{x}$ is an independent subset of size $\oo{G}+1$ in $N(x)$, impossible. 
\end{proof}

\begin{theorem}\label{thm:hhconntrg}
If $G$ is an countably infinite connected HH-homogeneous graph with finite star number $\oo{G}\geq 2$, then $\alpha(G)<2\oo{G}+\left\lceil\frac{\oo{G}}{2}\right\rceil-1$.
\end{theorem}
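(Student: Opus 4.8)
The plan is to argue by contradiction: assume $\alpha(G)\geq 2\oo{G}+\lceil \oo{G}/2\rceil-1$ and abbreviate $m=\oo{G}$. First I would record the standing reductions. Since $G$ is connected, countably infinite and HH-homogeneous, a connected infinite graph in which every vertex had finite degree and whose diameter is at most $2$ would be finite; combined with Proposition \ref{prop:infdegree} this forces every vertex to have infinite degree. Because $\oo{G}$ is finite, $G$ fails \trg, so the second case of Proposition \ref{prop:infdegree} applies and $G$ contains arbitrarily large cliques. In particular every copy of $K_3$ lies in $\kk{G}$, and by Proposition \ref{prop:clquecond} every copy of $K_3$ carries an infinite clique of cones.

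Next I would fix a directory of the right size. If $\alpha(G)$ is finite it is the size of a maximum independent set, which is maximal and hence dominating; if $\alpha(G)$ is infinite, then $G$ embeds a finite independent set of size $2m+\lceil m/2\rceil-1$, which I extend to a maximal (hence dominating) independent set. Either way I may fix a directory $I$ with $|I|\geq 2m+\lceil m/2\rceil-1$, and all hypotheses of Lemmas \ref{lem:HartmansTrick} and \ref{lem:WideCliques} hold since $\alpha(G)\geq 2m-1$. Applying Lemma \ref{lem:WideCliques} yields a triangle $C=\{v,w,z\}$ with $v\in K_S$, $w\in K_T$, $z\in K_U$, where $S,T,U\in\binom{I}{m}$, $S\cap T\cap U=\varnothing$ and $|S\cup T\cup U|=m+\lceil m/2\rceil$; consequently the reservoir $W:=I\setminus(S\cup T\cup U)$ contains at least $m-1$ vertices. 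Passing to the infinite clique of cones over $C$ (discarding the at most one cone lying in $I$, since the cones form a clique) and invoking Lemma \ref{lem:Dominating}, each cone $c$ satisfies that $\coord c\cap(S\cup T\cup U)$ dominates $C$, so $\coord c$ meets each of $S$, $T$ and $U$; as $\coord c=N(c)\cap I$ is an independent subset of $N(c)$ we have $|\coord c|\leq m$, and a pigeonhole argument over the finite set $S\cup T\cup U$ extracts an infinite subclique of cones sharing a common trace on $S\cup T\cup U$.

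The target of the contradiction is to manufacture a vertex $x\in K_P$, for some $P\in\binom{I}{m}$, together with a neighbour $y\notin I$ of $x$ whose $I$-address misses $P$ entirely. Indeed, $P\cup\{y\}$ would then be an independent set of size $m+1$ inside $N(x)$, contradicting $\oo{G}=m$ (equivalently, if $|\coord y|=m$ the edge $x\sim y$ is already forbidden by Proposition \ref{prop:DisjNoEdges}). The plan is to produce such a configuration by feeding the domination-$2$ triangle $C$, its infinite clique of cones, and the $m-1$ spare directory vertices of $W$ into homomorphisms that fix part of $I$ pointwise and transport the remaining address-entries into the reservoir, in exactly the spirit of Lemmas \ref{lem:HartmansTrick} and \ref{lem:WideCliques}; Proposition \ref{prop:DisjNoEdges} then governs which of the resulting vertices are non-adjacent.

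The main obstacle — and the only genuinely delicate point — is this last step: turning the numerical slack $|I|-|S\cup T\cup U|\geq m-1$ into an explicit extending homomorphism that forces a neighbour of some $K_P$-vertex to avoid all of $P$, thereby exhibiting the forbidden $(m+1)$-element independent neighbourhood. The count $|I\setminus(S\cup T\cup U)|\geq m-1$ is precisely what makes the threshold come out to $2m+\lceil m/2\rceil-1$, so the whole argument hinges on converting that slack into the required address-rotation; I expect the careful bookkeeping of which directory elements are fixed, which are moved into $W$, and how Proposition \ref{prop:DisjNoEdges} is applied to the images to be where the real work lies.
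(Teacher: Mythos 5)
Your scaffolding is exactly the paper's: a directory $I$ of size at least $2m+\lceil m/2\rceil-1$ (writing $m=\oo{G}$), the domination-$2$ triangle supplied by Lemma~\ref{lem:WideCliques}, and Lemma~\ref{lem:Dominating} applied to its cones. But the proof stops precisely where the theorem has to be proved. You name a target configuration, say the plan is to reach it by ``homomorphisms that fix part of $I$ pointwise and transport the remaining address-entries into the reservoir,'' and then defer this to bookkeeping you ``expect to be where the real work lies.'' No contradiction is ever derived: the non-existence of cones over some finite set is not by itself contradictory (that is just membership in $\okk{G}$, which graphs with \ntrg have in abundance), and you never exhibit a concrete local homomorphism that provably admits no extension. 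This is a plan, not a proof, and what is missing is the core of the argument.

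Moreover, the target you chose cannot be reached by your declared method. You want to force a vertex $x\in K_P$ to have a neighbour $y$ with $\coord{y}\cap P=\varnothing$; but homomorphism extensions can only force edges, never non-edges. To pin the address of an image vertex you must force adjacency to a full $m$-set $Q\subseteq I$ (then $\coord{y}=Q$ by Lemma~\ref{lem:nsks}), so you would need a domain vertex $c$ adjacent to preimages of $\{x\}\cup Q$. Since $Q\subseteq I\setminus\coord{x}$, the set $\{x\}\cup Q$ is independent, and since homomorphisms carry edges to edges, those $m+1$ preimages would form an independent set of size $m+1$ inside $N(c)$ --- a configuration that already contradicts $\oo{G}=m$ and hence cannot exist in the domain. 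The forcing is circular: your impossible object can only be manufactured from an equally impossible one. The paper's proof picks a different impossible object, one all of whose non-adjacencies lie inside $I$ and therefore need not be forced. It builds two \emph{isomorphic} finite sets: $X=C_1\cup D_1$, where $C_1$ is a triangle inside a single exact neighbourhood $K_S$ (so $d_I(C_1)=1$) and $D_1\subseteq I\setminus S$ has $m-1$ vertices, and $Y=C_2\cup D_2$, where $C_2$ is your domination-$2$ triangle and $D_2\subseteq I\setminus\coord{C_2}$ has $m-1$ vertices. Then $X\in\kk{G}$ by an explicit extension (a vertex $z\in K_{D_1\cup\{s\}}$ with $s\in S$, a triangle $C_1'\subseteq N(z)\cap K_S$ from Lemma~\ref{lem:HartmansTrick} and Proposition~\ref{prop:clquecond}, and the homomorphism fixing $S\cup D_1$ and carrying $C_1'$ to $C_1$), while $Y\in\okk{G}$ because any cone $z$ over $Y$ would have $D_2\cup\bigl(\coord{z}\cap\coord{C_2}\bigr)$, an independent subset of $I$ of size at least $(m-1)+2=m+1$, inside $N(z)$ (Lemma~\ref{lem:Dominating} plus $d_I(C_2)=2$). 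Since $X\cong Y$, this violates $\kk{G}\cap\okk{G}=\varnothing$ from Proposition~\ref{prop:3Conditions}. The two ideas absent from your sketch --- the domination-$1$ companion triangle and deriving the contradiction from that disjointness condition rather than from a forced non-adjacency --- are what actually convert the numerical slack into a contradiction.
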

\begin{proof}

Suppose for a contradiction that $\alpha(G)\geq2\oo{G}+\left\lceil\frac{\oo{G}}{2}\right\rceil-1$. Then there is a directory $I$ with at least $2\oo{G}+\left\lceil\frac{\oo{G}}{2}\right\rceil-1$ vertices. We find $X\in\kk{G}, Y\in\okk{G}$ such that $X$ and $Y$ induce isomorphic subgraphs of $G$.

Take $S\in{{I}\choose{\oo{G}}}$ and a copy $C_1$ of $K_3$ in $K_S$ (we can find $C_1$ because $K_S=N(S)$ contains an infinite clique by Propositions \ref{prop:homneighs} and \ref{prop:clquecond}). This $C_1$ clearly has $I$-domination number 1, as witnessed by any $s\in S$. 

Select $T,U,V\in{{I}\choose{\oo{G}}}$ such that $T\cap V\cap U=\varnothing$, $|T\cap V|=\left\lfloor\frac{\oo{G}}{2}\right\rfloor$, and $U\subseteq V\triangle T$. Now, as in the proof of Lemma \ref{lem:WideCliques}, there is a copy $C_2$ of $K_3$ in $G$ with $I$-domination number 2 such that $T,U,V$ are the addresses of its vertices. Pick any set $D_1$ with $\oo{G}-1$ vertices from $I\setminus S$ and let $X$ be $C_1\cup D_1$. Define $Y$ as $C_2\cup D_2$, where $D_2$ is any subset of $I\setminus\coord{C_2}$ with $\oo{G}-1$ vertices. Then $X$ and $Y$ are isomorphic to the union of $K_3$ and an independent set of size $\oo{G}-1$.

\begin{figure}[h!]
\centering
\includegraphics{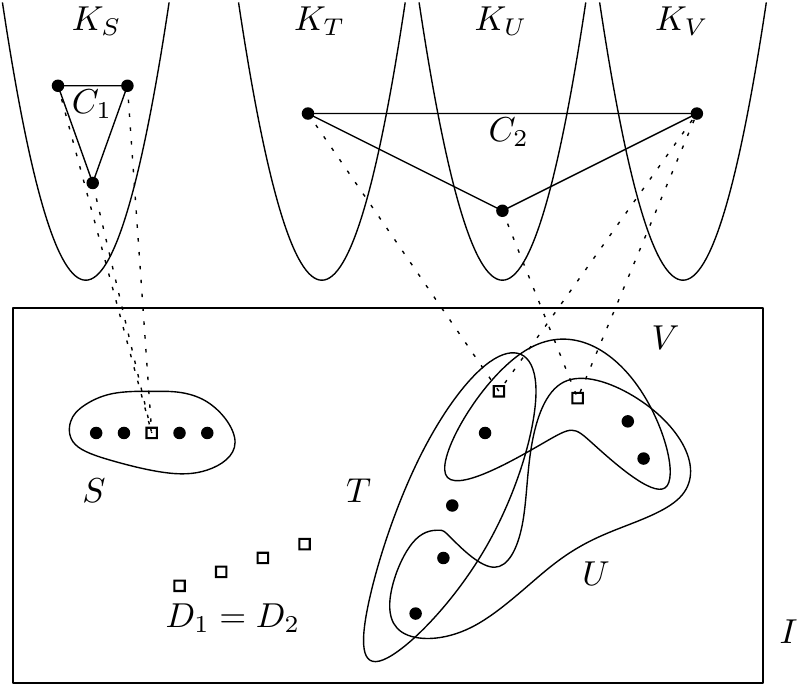}
\caption{The choice of addresses and triangles in the proof of Theorem \ref{thm:hhconntrg} with $\oo{G}=5$. To simplify the diagram, we assumed a $|I|\geq3\oo{G}+\left\lceil\frac{\oo{G}}{2}\right\rceil-1$, so that $D_1=D_2$ is possible. The square vertices in $S$ and $T\cup V$ dominate $C_1$ and $C_2$, respectively.}
\end{figure}

We claim that $X$ has a cone in $G$. To see this, consider the set $W=D_1\cup\{s\}$, where $s$ is any element of $S$. Let $z$ be any element of $K_W$. It is known that in a connected HH-homogeneous graph each vertex has infinite degree (Lemma 4 of \cite{RusinovSchweitzer:2010}), so we can apply Lemma \ref{lem:HartmansTrick} to conclude that $N(z)\cap K_S$ is infinite. Now Proposition \ref{prop:clquecond} and the finiteness of $\oo{G}$ imply that it contains an infinite clique, so in particular there is a copy of $K_3$, say $C_1'$ contained in $N(S\cup\{z\})$. Let $f\colon S\cup D_1\cup C_1'\to S\cup D_1\cup C_1$ be the homomorphism fixing $S\cup D_1$ pointwise and mapping $C_1'$ to $C_1$ bijectively. The image of $z$ under any extension of $f$ is a cone over $X$.

Next, we prove that $Y$ does not have a cone. By our choice of addresses, each vertex $v$ of $C_2$ satisfies $|\coord{v}|=\oo{G}$, so by Lemma \ref{lem:Dominating} the address of a cone $z$ over $C_2$ contains a dominating set $P_z$ for $C_2$, which in fact is $\coord{z}\cap\coord{C_2}$. By our choice of address and $D_2$, it is not possible for $Y$ to have a cone, as the neighbourhood of $z$ would contain $D_2\cup P_z$, an independent set of size at least $\oo{G}+1$.

The last two paragraphs contradict Condition \ref{DisjCond} in Proposition \ref{prop:3Conditions} and establish the Theorem.
\end{proof}

The bound on $\alpha(G)$ from Theorem \ref{thm:hhconntrg} above is tight. In $RS(3)$ (see example \ref{ex:rs}), we have $\oo{G}=2$ and $\alpha(G)=3=2\oo{G}+\left\lceil\frac{\oo{G}}{2}\right\rceil-2$.

\begin{corollary}\label{cor:hhconntrg}
If $G$ is an infinite connected HH-homogeneous graph with infinite independence number, then $G$ satisfies \trg.
\end{corollary}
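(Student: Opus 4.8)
The plan is to argue by contraposition: assuming $G$ does not satisfy \trg, I would show that $\alpha(G)$ must be finite, which contradicts the hypothesis of infinite independence number. The entire corollary is essentially a packaging of Theorem \ref{thm:hhconntrg} together with Proposition \ref{prop:alphabound}, so the work lies not in a new construction but in organizing the degenerate cases and invoking the main theorem with the correct hypotheses.

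First I would observe that since $G$ is HH-homogeneous and fails \trg, Proposition \ref{prop:alphabound} (in its rephrased form) gives that the star number $\oo{G}$ is finite. Next I would rule out the small values of $\oo{G}$. Because $G$ is connected and infinite it must contain an edge, so some neighbourhood is nonempty and hence $\oo{G}\geq 1$; this disposes of the case $\oo{G}=0$. The case $\oo{G}=1$ is handled as follows: if $\alpha(N(v))\leq 1$ for every vertex $v$, then every neighbourhood induces a clique, and the usual transitivity argument (if $u\sim v\sim w$ then $u,w\in N(v)$ forces $u\sim w$) shows that $G$ is a disjoint union of cliques; connectedness then makes $G$ a single clique, whence $\alpha(G)=1$. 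Since we are assuming $\alpha(G)$ is infinite, this is already a contradiction, so in fact $\oo{G}\geq 2$.

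With $\oo{G}\geq 2$ established, I would apply Theorem \ref{thm:hhconntrg} directly to conclude that $$\alpha(G)<2\oo{G}+\left\lceil\frac{\oo{G}}{2}\right\rceil-1,$$ which is a finite bound. This contradicts the assumption that $\alpha(G)$ is infinite, completing the contrapositive and hence the corollary.

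The only real subtlety I expect is the interplay between the hypotheses of the corollary (``infinite'') and those of Theorem \ref{thm:hhconntrg} (``countably infinite''): the theorem is stated for countable graphs, and the supporting machinery (Propositions \ref{prop:infdegree}, \ref{prop:clquecond} and Lemma \ref{lem:HartmansTrick}) was developed in the countable setting, so the argument above is carried out within the paper's standing scope of countable structures. Aside from that bookkeeping, the deduction is immediate: the reader should view this corollary as the qualitative restatement of the quantitative bound in Theorem \ref{thm:hhconntrg}, namely that a connected HH-homogeneous graph whose neighbourhoods avoid large independent sets cannot itself have arbitrarily large independent sets unless it contains $\mathcal R$ as a spanning subgraph.
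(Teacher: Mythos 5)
Your proof is correct and follows essentially the same route as the paper's: a contrapositive argument that combines Proposition \ref{prop:alphabound} (failure of \trg gives finite star number) with Theorem \ref{thm:hhconntrg} (finite star number plus connectedness gives finite $\alpha(G)$), contradicting the hypothesis. You are in fact slightly more careful than the paper's two-line proof, which invokes Theorem \ref{thm:hhconntrg} without verifying its hypothesis $\oo{G}\geq 2$; your elimination of the degenerate cases --- $\oo{G}=0$ is impossible since a connected infinite graph has an edge, and $\oo{G}=1$ forces $G$ to be a single clique with $\alpha(G)=1$ --- closes that small gap.
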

\begin{proof}
If $G$ satisfies $\neg$\trg, then the star number of $G$ is finite, by Proposition \ref{prop:alphabound}. Hence, by Theorem \ref{thm:hhconntrg} $\alpha(G)$ is finite, too--- a contradiction.
\end{proof}

\section{MB-homogeneous graphs}\label{sec:MBclass}

In this section we use Corollary \ref{cor:hhconntrg} to classify MB-homogeneous graphs up to bimorphism-equivalence. We remind the reader that two relational structures $G$ and $H$ are bimorphism-equivalent if there exist bijective homomorphisms $F:G\to H$ and $J:H\to G$. ``Bimorphism-equivalent'' and ``isomorphic'' are distinct notions only for infinite structures. For graphs, bimorphism-equivalence means that $G$ is (isomorphic to) a spanning subgraph of $H$ and $H$ is (isomorphic to) a spanning subgraph of $G$.

The following theorem is an amalgamation of results from Cameron-\Jarik \cite{CameronNesetril:2006} and Coleman-Evans-Gray \cite{ColemanEvansGray:2019}:
\begin{theorem}\label{thm:bimRad}
A countably infinite graph with property \trg contains the Rado graph as a spanning subgraph. If in addition it satisfies \ctrg, then it is bimorphism-equivalent to the Rado graph.
\end{theorem}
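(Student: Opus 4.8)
The plan is to read bimorphism-equivalence to $\mathcal R$ as the conjunction of two spanning-subgraph statements, exactly as explained just before the theorem: a bijective homomorphism $J\colon\mathcal R\to G$ exhibits $\mathcal R$ as (isomorphic to) a spanning subgraph of $G$, while a bijective homomorphism $F\colon G\to\mathcal R$ exhibits $G$ as (isomorphic to) a spanning subgraph of $\mathcal R$. The first assertion of the theorem supplies $J$; the second assertion, which needs the extra hypothesis \ctrg, will supply $F$. Concretely, I would realise each direction by choosing a new edge set on the fixed vertex set $V$ of $G$: an edge set $E'\subseteq E$ with $(V,E')\cong\mathcal R$ for the first direction, and an edge set $E''\supseteq E$ with $(V,E'')\cong\mathcal R$ for the second. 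The identity map is then a bijective homomorphism $(V,E')\to G$ in one case and $G\to(V,E'')$ in the other, so composing with the isomorphisms to $\mathcal R$ yields $J$ and $F$.

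For the first direction I would build $E'$ by a step-by-step construction meeting every extension requirement. Enumerate the countably many pairs $(A,B)$ of disjoint finite subsets of $V$ and process them one at a time, maintaining a finite set of already-decided vertex pairs. To satisfy requirement $(A,B)$ it suffices to commit a witness $x$ that is adjacent in $E'$ to every vertex of $A$ and nonadjacent in $E'$ to every vertex of $B$. Property \trg guarantees a cone over $A$, and in fact infinitely many (a cone over $A\cup\{c_1,\dots,c_k\}$ is a further cone over $A$ distinct from $c_1,\dots,c_k$), so I can choose $x$ to be a cone over $A$ that is \emph{fresh}, i.e. outside $A\cup B$ and not involved in any previously decided pair; only finitely many vertices are excluded at each stage. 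I then declare the pairs $\{x,a\}$ with $a\in A$ to be edges of $E'$, which is legitimate because $x\sim a$ in $G$, and the pairs $\{x,b\}$ with $b\in B$ to be nonedges. After all requirements are handled I set every still-undecided pair to be a nonedge, so that $E'\subseteq E$ by construction. The resulting graph $(V,E')$ satisfies the extension property that characterises $\mathcal R$, hence is isomorphic to it.

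The second direction is exactly dual, with co-cones replacing cones and \ctrg replacing \trg. Here the constraint is reversed: since $E''\supseteq E$, a witness $x$ for $(A,B)$ must be nonadjacent to $B$ already in $G$, i.e. a co-cone over $B$; property \ctrg provides such a co-cone, and again infinitely many by the same inductive observation. I choose a fresh co-cone $x$ over $B$, declare $\{x,b\}$ nonedges (legitimate since $x\not\sim b$ in $G$) and $\{x,a\}$ edges, and finally let every undecided pair inherit its status from $E$. Because the only nonedges I introduce are $G$-nonedges, no $G$-edge is ever deleted, so $E\subseteq E''$, and $(V,E'')\cong\mathcal R$ for the same reason as before.

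This is a standard Rado-graph construction, so I expect no deep obstacle; the points requiring care are the bookkeeping that keeps each witness fresh (ensuring the finitely many previously-decided pairs never clash with the new commitments) and the verification that filling in the undecided pairs preserves $E'\subseteq E$ respectively $E\subseteq E''$. The one genuinely conceptual step is recognising the clean duality between the two properties: \trg drives the passage to a spanning Rado \emph{subgraph} (only edges are deleted), while its complementary property \ctrg drives the passage to a spanning Rado \emph{supergraph} (only edges are added), and bimorphism-equivalence to $\mathcal R$ is precisely the combination of these two one-sided statements.
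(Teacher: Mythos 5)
Your argument is correct, but it cannot be compared line-by-line with the paper's own reasoning, because the paper does not prove Theorem \ref{thm:bimRad} at all: it is explicitly presented as an amalgamation of quoted results, the first sentence being the observation attributed to \cite{CameronNesetril:2006} (restated earlier in the paper as ``a countably infinite graph satisfies \trg iff it contains the Rado graph as a spanning subgraph''), and the second being a result of Coleman, Evans and Gray \cite{ColemanEvansGray:2019}. What you have done is reconstruct the standard proofs behind those citations. Your decomposition of bimorphism-equivalence into two spanning-subgraph statements is exactly the reading the paper gives just before the theorem, and both of your constructions are sound: the freshness bookkeeping works because \trg (respectively \ctrg) yields infinitely many cones (respectively co-cones) over any finite set, via your observation that a cone over $A\cup\{c_1,\dots,c_k\}$ is a new cone over $A$; the asymmetry you identify (a witness for $(A,B)$ must be a $G$-cone over $A$ when only edge deletions are allowed, and a $G$-co-cone over $B$ when only edge additions are allowed) is precisely the right use of the two hypotheses; and since each vertex pair is decided at most once, the committed adjacencies survive into the final graph, which therefore satisfies the defining extension property of $\mathcal R$. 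What your write-up buys is self-containedness, at the cost of reproving known facts; the paper's economy reflects that its genuinely new contribution in Section \ref{sec:MBclass} is not Theorem \ref{thm:bimRad} itself but the way Corollary \ref{cor:hhconntrg} feeds into it to yield the classification in Theorem \ref{thm:mainthm}.
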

An important fact from Coleman-Evans-Gray \cite{ColemanEvansGray:2019}:
\begin{theorem}\label{thm:MBcomp}
If $G$ is a MB-homogeneous graph, then its complement $\overline{G}$ is also MB-homogeneous.
\end{theorem}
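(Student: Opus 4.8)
The plan is to avoid any direct manipulation of complements of morphisms---which do not behave well, since a homomorphism preserves edges only in one direction---and instead exploit two elementary \emph{inversion dualities}. These reduce MB-homogeneity of $\overline{G}$ immediately to MB-homogeneity of $G$. The heart of the matter is that neither monomorphisms nor bimorphisms are self-dual under complementation, but both become dual once one passes to \emph{inverse} maps.

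First I would record how bimorphisms transform. Let $F\colon V\to V$ be a bijection. By definition, $F$ is a homomorphism of $\overline{G}$ exactly when, for distinct $x,y$, the non-edge $x\not\sim y$ forces $F(x)\not\sim F(y)$ (the inequality of images being automatic since $F$ is a bijection). Taking the contrapositive and writing $u=F(x)$, $v=F(y)$, this says precisely that $u\sim v$ implies $F^{-1}(u)\sim F^{-1}(v)$, i.e. that $F^{-1}$ is a homomorphism of $G$. Hence a bijection $F$ is a bimorphism of $\overline{G}$ if and only if $F^{-1}$ is a bimorphism of $G$. The same computation, applied to a finite injective map $f$ between finite induced subgraphs (noting that a finite induced subgraph of $\overline{G}$ on a vertex set $A$ is just the complement of the induced subgraph of $G$ on $A$, so the two families of finite induced subgraphs correspond under their vertex sets), yields the analogous statement for partial maps: $f$ is a monomorphism of $\overline{G}$ if and only if $f^{-1}$ is a monomorphism of $G$.

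With both dualities in hand the argument is short. Given any local monomorphism $f$ of $\overline{G}$, the second duality makes $f^{-1}$ a local monomorphism of $G$; since $G$ is MB-homogeneous, $f^{-1}$ extends to a bimorphism $F$ of $G$. By the first duality, $F^{-1}$ is a bimorphism of $\overline{G}$. Finally, because $F$ extends $f^{-1}$ we have $F^{-1}(x)=f(x)$ for every $x$ in the domain of $f$, so $F^{-1}$ extends $f$. Thus every local monomorphism of $\overline{G}$ extends to a bimorphism of $\overline{G}$, which is exactly MB-homogeneity of $\overline{G}$. (By the symmetry $\overline{\overline{G}}=G$ this in fact gives an equivalence, though only one direction is asserted.)

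I do not expect a serious obstacle here: the only conceptual point is the recognition that complementation transposes the edge and non-edge relations but \emph{reverses} the direction in which a one-sided morphism preserves them, so that it is the inverse map, not the ``complemented'' map, that carries edge-preservation in $\overline{G}$ to edge-preservation in $G$. Once that is observed, the remaining work is purely the bookkeeping that turns the various ``distinct vertices'' side-conditions into automatic consequences of injectivity and bijectivity, which is routine.
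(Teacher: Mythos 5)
Your proof is correct, but note that there is no internal proof in the paper to compare it against: Theorem \ref{thm:MBcomp} is stated as an imported fact from Coleman--Evans--Gray \cite{ColemanEvansGray:2019}, with no argument given. Judged on its own, your write-up is complete and the details check out. The contrapositive computation establishes both dualities: for a bijection $F$ of the vertex set, the implication ``$x\not\sim y \Rightarrow F(x)\not\sim F(y)$'' (for distinct $x,y$) is equivalent to ``$u\sim v \Rightarrow F^{-1}(u)\sim F^{-1}(v)$'', so $F$ is a bimorphism of $\overline{G}$ if and only if $F^{-1}$ is a bimorphism of $G$, and the identical computation for finite injective maps gives the partial-map version; in both cases injectivity disposes of the ``distinct images'' clause. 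The gluing step is also right: since $F$ restricted to the image of $f$ equals $f^{-1}$, we get $F(f(x))=x$ for all $x$ in the domain of $f$, hence $F^{-1}$ extends $f$, and the first duality makes $F^{-1}$ a bimorphism of $\overline{G}$. For comparison: this inversion duality is essentially the mechanism used in \cite{ColemanEvansGray:2019} itself, where it takes the form of a correspondence $F\mapsto F^{-1}$ between the bimorphism monoids of $G$ and of $\overline{G}$; so your argument in effect reconstructs the source's proof, and including it would make the present paper self-contained on this point at the cost of a short paragraph. A minor observation in your favour: nothing in your proof uses countability, which matches the theorem as stated, since it carries no cardinality hypothesis.
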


\begin{remark}\label{rmk:MBHH}
Any MB-homogeneous graph is MH-homogeneous because a bimorphism is a homomorphism. It follows from the fact that MH=HH for graphs (see \cite{RusinovSchweitzer:2010}) that MB-homogeneous graphs are HH-homogeneous.
\end{remark}

Given two graphs $G$ and $H$ with disjoint vertex sets, we can form the \emph{graph composite} or \emph{lexicographic product} of $G$ and $H$, denoted by $G[H]$, as follows: the vertex set is $G\times H$ and $(g,h)\sim(g',h')$ if $g\sim g'$ in $G$ or $g=g'$ and $h\sim h'$ in $H$. In $G[H]$, each set of the form $\{g\}\times H$ induces an isomorphic copy of $H$ and for any function $f:G\to H$, the set $\{(g,f(g)):g\in G\}$ with its induced subgraph structure in $G[H]$ is isomorphic to $G$. We will use $I_\kappa$ to denote an independent set of size $\kappa$. The \emph{co-degree} of a vertex $v\in G$ is $|\{w\in G:w\neq v\wedge w\not\sim v\}|$.

\begin{corollary}\label{cor:InfDegCodeg}
If $G$ is not complete or null, countably infinite, and MB-homo\-geneous, then it is either connected or isomorphic to $I_\omega[K_\omega]$. If $G$ is not null, then every vertex has infinite degree and co-degree.
\end{corollary}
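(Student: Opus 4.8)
The plan is to derive everything from the HH-homogeneity of $G$ and of $\overline{G}$ together with the structural criterion of Proposition \ref{prop:3Conditions}. First I record the ambient facts: $G$ is HH-homogeneous by Remark \ref{rmk:MBHH}, and $\overline{G}$ is MB-homogeneous by Theorem \ref{thm:MBcomp}, hence also HH-homogeneous; since $G$ is neither complete nor null, the same holds for $\overline{G}$. I will settle the connectivity dichotomy first and then read off the degree and co-degree statements from it.

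For the dichotomy, suppose $G$ is disconnected. The key reduction is that every connected component must be a clique. If some component $C$ were not complete, then $C$ would contain an induced path $a\sim x\sim b$ with $a\not\sim b$, so the non-edge $\{a,b\}$ (a copy of $\overline{K_2}$) has a cone $x$ in $G$, witnessing $\overline{K_2}\in\kk{G}$. On the other hand, picking $w$ in a different component gives a non-edge $\{a,w\}$ with no common neighbour (a common neighbour would join two components), so $\overline{K_2}\in\okk{G}$ as well. This contradicts $\kk{G}\cap\okk{G}=\varnothing$ from Proposition \ref{prop:3Conditions}. Hence $G=\bigsqcup_i K_{\kappa_i}$ is a disjoint union of cliques. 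By Proposition \ref{prop:infdegree} either all the $\kappa_i$ are infinite or all are finite, and in the finite case mapping a single vertex of one component to a vertex of another and extending to a bimorphism shows all $\kappa_i$ are equal; so $G\cong\lambda\cdot K_\kappa$ for a uniform clique size $\kappa\geq2$ (size $\geq2$ because $G$ is not null) and some number of components $\lambda\geq2$ (as $G$ is disconnected).

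The heart of the argument is now to show $\kappa=\lambda=\omega$, i.e. $G\cong I_\omega[K_\omega]$, and this is where I expect the main obstacle: controlling the bimorphisms of $\lambda\cdot K_\kappa$. A bijective endomorphism $F$ sends each component (a connected clique) into a single component, inducing a map $\pi$ on the set of components with $|F(C_i)|=\kappa$ for each $i$. When either $\kappa$ or $\lambda$ is finite, surjectivity of $F$ forces $\pi$ to be a bijection and each restriction $F|_{C_i}$ to be onto its target component, so that $F$ is in fact an automorphism and in particular preserves non-edges. But $\lambda\cdot K_\kappa$ (with $\kappa,\lambda\geq2$) contains both an edge and a non-edge, and the monomorphism sending the two endpoints of a cross-component non-edge to the two endpoints of an edge then cannot extend to a bimorphism, contradicting MB-homogeneity. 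Therefore both $\kappa$ and $\lambda$ must be infinite, giving $G\cong I_\omega[K_\omega]$ and completing the dichotomy.

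Finally, the degree statement follows from the dichotomy applied to $G$ and to $\overline{G}$. If $G$ is not null then $G$ is connected or isomorphic to $I_\omega[K_\omega]$; in the connected case every vertex has infinite degree by Lemma 4 of \cite{RusinovSchweitzer:2010}, and in the $I_\omega[K_\omega]$ case it is immediate, so every vertex of $G$ has infinite degree. For co-degrees, note that $G$ being not complete makes $\overline{G}$ not null, so applying the previous sentence to the MB-homogeneous graph $\overline{G}$ shows every vertex of $\overline{G}$ has infinite degree, which is exactly the statement that every vertex of $G$ has infinite co-degree. The only genuinely delicate point is the bimorphism analysis of the third paragraph; the rest is bookkeeping with Propositions \ref{prop:3Conditions} and \ref{prop:infdegree} and passage to the complement.
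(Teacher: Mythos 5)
Your proof is correct, but it takes a genuinely more self-contained route than the paper's. The paper splits the disconnected case in two steps: it first rules out finitely many components directly (the monomorphism $u\mapsto u$, $w\mapsto v$ across components cannot extend to a \emph{surjective} endomorphism), then notes that a disconnected HH-homogeneous graph has all components cliques, and finally cites Proposition~3.4 of \cite{ColemanEvansGray:2019} for the fact that among disjoint unions of cliques only $I_\omega[K_\omega]$ is MB-homogeneous. You instead reprove that classification from the paper's own machinery: components are cliques because otherwise $\overline{K_2}\in\kk{G}\cap\okk{G}$, contradicting Proposition~\ref{prop:3Conditions}; clique sizes are uniform by Proposition~\ref{prop:infdegree} plus an extension argument; and your key observation --- that when either the number of components or the common clique size is finite, every bimorphism of a disjoint union of equipotent cliques is forced to permute the components and hence be an automorphism, so a non-edge-to-edge monomorphism cannot extend --- simultaneously subsumes the paper's finite-component argument and the external citation. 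The codegree claim is also handled differently: the paper supposes a vertex of finite codegree $n>0$, deduces $\overline{G}\cong I_\omega[K_{n+1}]$, and invokes the non-MB-homogeneity of that graph to contradict Theorem~\ref{thm:MBcomp}, whereas you simply apply the already-established degree statement to $\overline{G}$ (legitimate, since Theorem~\ref{thm:MBcomp} makes $\overline{G}$ MB-homogeneous, and $G$ being neither complete nor null makes $\overline{G}$ neither null nor complete, so your dichotomy applies to it). What each buys: the paper's proof is shorter because it outsources the hard case to \cite{ColemanEvansGray:2019}; yours is self-contained and, as a bonus, your symmetric treatment of codegree automatically covers the codegree-$0$ case, which the paper's contradiction (stated only for $n>0$) leaves implicit. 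One small nit: when you say the degree statement ``follows from the dichotomy applied to $G$,'' note that the dichotomy's hypothesis excludes complete graphs, so you should add the (trivial) remark that a complete $G$ is connected anyway.
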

\begin{proof}
Since $G$ is not null, there is some connected component $C$ with at least one edge $u\sim v$; if $G$ is not connected, then there is $w\in G\setminus C$. If $G$ has only finitely many connected components, then the monomorphism $u\mapsto u$, $w\mapsto v$ cannot be extended to a surjective endomorphism. Therefore, $G$ is connected or has infinitely many connected components.

We know that $G$ is HH-homogeneous by Remark \ref{rmk:MBHH}, so if $G$ is disconnected, then each connected component is a clique. From these, only $I_\omega[K_\omega]$ is MB-homogeneous (Proposition 3.4 of \cite{ColemanEvansGray:2019}). 

The claims about degree and codegree are obviously true in $I_\omega[K_\omega]$. 

If $G$ is connected, then it follows from HH-homogeneity that every vertex has infinite degree (Proposition 1.1 (c) of \cite{CameronNesetril:2006}). 

Now suppose for a contradiction that $G$ is MB-homogeneous and some vertex $w$ has finite codegree $n>0$. We know by Theorem \ref{thm:MBcomp} that $\overline{G}$ is also MB-homogeneous, so it is an HH-homogeneous graph with a vertex of finite degree, so $\overline{G}$ cannot be connected. Disconnected HH-homogeneous graphs are unions of equipotent cliques, so $\overline G\cong I_\omega[K_{n+1}]$. Since this graph is not MB-homogeneous, this contradicts Theorem \ref{thm:MBcomp}.
\end{proof}

We now have enough information to classify MB-homogeneous graphs up to bimorphism equivalence. This answers a question from \cite{ColemanEvansGray:2019}.

\begin{theorem}\label{thm:mainthm}
Let $G$ be a countably infinite MB-homogeneous graph. Then $G$ is bi\-morphism-equivalent to one of the following or its complement:
\begin{enumerate}
\item{$K_\omega$,}
\item{$I_\omega[K_\omega]$,}
\item{The Rado graph $\mathcal R$.}
\end{enumerate}
\end{theorem}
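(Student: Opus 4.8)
The plan is to case-split according to the structural dichotomies already established for MB-homogeneous graphs and route each case through the spanning-subgraph characterisations in Theorem~\ref{thm:bimRad}. First I would dispose of the degenerate cases: if $G$ is null then its complement is complete, so $G$ is covered by the complement of case (1); if $G$ is complete it is case (1) directly; if $G$ is disconnected but non-null, then by Corollary~\ref{cor:InfDegCodeg} it is isomorphic to $I_\omega[K_\omega]$, which is case (2). Thus the substantive work is confined to the case where $G$ is connected, not complete, and not null. The goal in that case is to show $G$ is bimorphism-equivalent to $\mathcal R$, which by Theorem~\ref{thm:bimRad} amounts to verifying that $G$ satisfies both \trg and \ctrg.

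The main step is to establish \trg for a connected MB-homogeneous graph that is not complete. Here I would argue that such a $G$ has infinite independence number: since $G$ is not complete it has a nonedge, and using MB-homogeneity (monomorphisms extend to bimorphisms) one shows there are no finite maximal independent sets, hence arbitrarily large independent sets. Concretely, given a maximal independent set $\{v_1,\dots,v_n\}$, the infinite co-degree of $v_1$ (Corollary~\ref{cor:InfDegCodeg}) supplies a vertex $c$ nonadjacent to $v_1$; the map shifting $v_i\mapsto v_{i+1}$ and $v_n\mapsto c$ is a monomorphism, and pulling back $v_1$ through a surjective endomorphic extension produces a larger independent set, a contradiction. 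With $\alpha(G)=\infty$ and $G$ connected and HH-homogeneous (Remark~\ref{rmk:MBHH}), Corollary~\ref{cor:hhconntrg} immediately yields \trg. This is the step where the paper's main theorem on independence numbers does its work, so I expect it to be the crux: everything upstream is bookkeeping, but the jump from ``infinite independence number'' to ``property \trg'' is exactly Corollary~\ref{cor:hhconntrg}, which encapsulates the hard combinatorial content of Section~\ref{sec:alphainf}.

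The final step is to obtain \ctrg, and here I would exploit the self-duality of the class under complementation. By Theorem~\ref{thm:MBcomp}, $\overline{G}$ is also MB-homogeneous. The property \ctrg for $G$ is by definition \trg for $\overline{G}$, so it suffices to run the previous paragraph's argument on $\overline{G}$. The only subtlety is that $\overline{G}$ need not be connected: by Corollary~\ref{cor:InfDegCodeg} the alternative is $\overline{G}\cong I_\omega[K_\omega]$, i.e.\ $G\cong\overline{I_\omega[K_\omega]}$. In that exceptional case $G$ is the complement of a case~(2) graph and is therefore accounted for by the ``or its complement'' clause of the statement. Otherwise $\overline{G}$ is connected and non-complete (the latter because $G$ is non-null), so \trg holds for $\overline{G}$, i.e.\ \ctrg holds for $G$. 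Having both \trg and \ctrg, the second sentence of Theorem~\ref{thm:bimRad} gives that $G$ is bimorphism-equivalent to $\mathcal R$, which is case~(3), completing the classification.
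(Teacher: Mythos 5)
Your proposal is correct, and every step checks out: the degenerate cases are dispatched exactly as Corollary~\ref{cor:InfDegCodeg} allows, the pull-back argument is sound (the shift $v_i\mapsto v_{i+1}$, $v_n\mapsto c$ is a monomorphism since the domain is independent, and a preimage of $v_1$ under a bimorphism extension enlarges the independent set), and the complementation step correctly isolates $\overline{I_\omega[K_\omega]}$ as the only obstruction to $\overline{G}$ being connected. However, your route to $\alpha(G)=\infty$ differs from the published proof --- and, amusingly, it coincides with the authors' original argument, which survives in the source only as commented-out lemmas; the acknowledgements thank a referee for the shortcut that replaced it. The published proof keeps the same skeleton (trichotomy $G\cong K_\omega$, $G\cong\overline{I_\omega[K_\omega]}$, or $\overline{G}$ connected, then Theorem~\ref{thm:MBcomp}, Corollary~\ref{cor:hhconntrg}, Theorem~\ref{thm:bimRad}) but obtains the infinite independence numbers for free from connectivity: a connected HH-homogeneous graph has all vertices of infinite degree and hence, by Proposition~\ref{prop:clquecond}, contains an infinite clique; so connectivity of $G$ gives $\alpha(\overline{G})=\omega$ and connectivity of $\overline{G}$ gives $\alpha(G)=\omega$, with no maximal-independent-set bookkeeping at all. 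The trade-off is that your argument invokes the full strength of MB-homogeneity (surjectivity of the extending endomorphism is essential to the pull-back), whereas the paper's argument uses surjectivity only through the complement-closure Theorem~\ref{thm:MBcomp} and otherwise runs entirely on HH-consequences, which is what makes the final proof only a few lines long.
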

\begin{proof}
If $G$ is a connected, countably infinite MB-homogeneous graph, then one of the following holds:
\begin{enumerate}
\item{$G\cong K_\omega,$}
\item{$G\cong\overline{I_\omega[K_\omega]}$,}
\item{$\overline{G}$ is connected.}
\end{enumerate}
The first two cases can be handled by Corollary \ref{cor:InfDegCodeg}. We turn our attention to the third one. Since $G$ is connected, it contains a copy of $K_\omega$, and for the same reason $\overline{G}$ contains a copy of $K_\omega$. It follows that $\alpha(G)=\alpha(\overline{G})=\omega$, and by Corollary \ref{cor:hhconntrg}, both $G$ and $\overline{G}$ satisfy \trg. We conclude that $G$ is bimorphism-equivalent to $\mathcal R$ (Theorem \ref{thm:bimRad}).
\end{proof}

\begin{remark}
In the first two cases of Theorem \ref{thm:mainthm}, the bimorphism is always an isomorphism. Only four of the uncountably many countable MB-homogeneous graphs are not bimorphism-equivalent to the Rado graph.
\end{remark}

\section{Acknowledgements}
The first author was funded by the ERC under the European Union's Horizon 2020 Research and Innovation Programme (grant agreement No. 681988, CSP-Infinity). The second author was partially supported by the ERC Synergy grant DYNASNET (grant agreement No. 810115).

We thank the anonymous referees for their careful reading of our work and their suggestions. We are particularly thankful to the referee who spotted a shortcut in the proof of Theorem \ref{thm:mainthm}
%\section*{References}
\bibliography{Morph}

\end{document}